\DeclareMathOperator{\RE}{Re} 		\DeclareMathOperator{\sign}{sign}
\DeclareMathOperator{\IM}{Im} 		
\DeclareMathOperator{\vspan}{span} 	
\DeclareMathOperator{\ext}{ext}
\DeclareMathOperator{\co}{co}
\theoremstyle{definition}
\newtheorem{defn}{Definition}[section]
\theoremstyle{plain}
\theoremstyle{definition}
\newtheorem{rem}[defn]{Remark}
\theoremstyle{plain}
\newtheorem{lem}[defn]{Lemma}
\newtheorem{thm}[defn]{Theorem}
\theoremstyle{plain}
\newtheorem{prop}[defn]{Proposition}
\theoremstyle{plain}
\newtheorem{cor}[defn]{Corollary}
\newcommand{\R}{\mathbb{R}}
\newcommand{\C}{\mathbb{C}}
\newcommand{\eps}{\varepsilon}
\renewcommand{\leq}{\leqslant}
\renewcommand{\geq}{\geqslant}
\numberwithin{equation}{section}
\begin{document}
\title[Strong peak points and Strongly norm attaining points]{Strong peak points and strongly norm attaining points with applications to denseness and polynomial numerical indices}
\date{}

\author[J.~Kim]{Jaegil Kim}
\author[H.~J.~Lee]{Han Ju Lee}

\subjclass{46G25,  46B20, 46B22, 52A21, 46B20}

\baselineskip=.6cm

\begin{abstract}
Using the variational method, it is shown that the set of all strong peak functions in a closed algebra $A$ of $C_b(K)$ is dense if and only if the set of all strong peak points is a norming subset of $A$. As a corollary we can induce the denseness of strong peak functions on other certain spaces. In case that a set of uniformly strongly exposed points of a Banach space $X$ is a norming subset of $\mathcal{P}({}^n X)$, then the set of all strongly norm attaining elements in $\mathcal{P}({}^n X)$ is dense. In particular, the set of all points at which the norm of $\mathcal{P}({}^n X)$ is Fr\'echet differentiable is a dense $G_\delta$ subset.

In the last part, using Reisner's graph theoretic-approach, we construct some strongly norm attaining polynomials on a CL-space with an absolute norm.
Then we show that for a finite dimensional complex Banach space $X$ with an absolute norm, its polynomial numerical indices are one if and only if $X$ is isometric to $\ell_\infty^n$. Moreover, we give a characterization of the set of all complex extreme points of the unit ball of a CL-space with an absolute norm.
\end{abstract}

\maketitle

\section{Introduction and Preliminaries}

Let $K$ be a complete metric space and $X$ a (real or complex) Banach space. We denote by $C_b(K:X)$ the Banach space of all bounded continuous functions from $K$ to $X$ with the supremum norm. A nonzero function $f\in C_b(K:X)$ is said to be a {\it strong peak function} at $t\in K$ if every sequence $\{t_n\}$ in $K$ with $\lim_n\|f(t_n)\|=\|f\|$ converges to $t$. Given a subspace $A$ of $C_b(K:X)$, a point $t\in K$ called a {\it strong peak point} for $A$ if there is a strong peak function $f$ in $A$ with $\|f\|= \|f(t)\|$. We denote by $\rho A$ the set of all strong peak points for $A$.

Let $B_X$ (resp. $S_X$) be the unit ball (resp. sphere) of a Banach space $X$. A nonzero function $f \in C_b(B_X:Y)$ is said to \emph{strongly attain its norm} at $x$ if for every sequence $\{x_n\}$ in $B_X$ with $\lim_n\|f(x_n)\|=\|f\|$, there exist a scalar $\lambda$ with $|\lambda|=1$ and a subsequence of $\{x_n\}$ which converges to $\lambda x$. Given a subspace $A$ of $C_b(B_X:Y)$, $x \in B_X$ is called a \emph{strongly norm-attaining point} of $A$ if there exists a nonzero function $f$ in $A$ which strongly attain its norm at $x$. Denote by $\tilde{\rho}A$ the set of all strongly norm-attaining points of $A$.

For complex Banach spaces $X$ and $Y$, we may use the following two subspaces of $C_b(B_X:Y)$:
\begin{align*}
\mathcal{A}_b(B_X:Y) &=\left\{ f\in C_b(B_X:Y): f \mbox{ is holomorphic on the interior of } B_X  \right\}\\ \mathcal{A}_u(B_X:Y) &= \left\{ f\in \mathcal{A}_b(B_X:Y) : f \mbox{ is uniformly continuous on } B_X \right\}.
\end{align*}
We shall denote by $\mathcal{A}(B_X:Y)$ either $\mathcal{A}_b(B_X:Y)$ or $\mathcal{A}_u(B_X:Y)$. In case that $Y$ is the complex filed $\mathbb{C}$, we write $\mathcal{A}(B_X)$, $\mathcal{A}_u(B_X)$ and $\mathcal{A}_b(B_X)$ instead of $\mathcal{A}(B_X:\mathbb{C})$, $\mathcal{A}_u(B_X:\mathbb{C})$ and
$\mathcal{A}_b(B_X:\mathbb{C})$ respectively.

If $X$ and $Y$ are Banach spaces, an \emph{$k$-homogeneous polynomial} $P$ from $X$ to $Y$ is a mapping such that there is an $k$-linear continuous mapping $L$ from $X\times \dots \times X$ to $Y$ such that $$ P(x) = L(x, \dots, x) \quad \text{ for every } x\in X.$$  ${\mathcal P}(^k X:Y)$ denote the Banach space of all $k$-homogeneous polynomials from $X$ to $Y$, endowed with the polynomial norm $\|P\|=\sup_{x \in B_X}{\|P(x)\|}$.  We also say that $P:X\rightarrow Y$ is a \emph{polynomial}, and write $P \in {\mathcal P}(X:Y)$ if $P$ is a finite sum of homogeneous polynomials from $X$ into $Y$. In particular, replace ${\mathcal P}(^k X:Y)$ by $\mathcal{P}({}^kX)$ and ${\mathcal P}(X:Y)$ by $\mathcal{P}(X)$ when $Y$ is a scalar field. We refer to \cite{Din99} for background on polynomials.

The (polynomial) numerical index of a Banach space is a constant relating to the concepts of the numerical radius of functions on $X$. Actually, for each $f \in C_b(B_X:X)$, the \emph{numerical radius} $v(f)$ is defined by $$v(f)=\sup \{|x^*f(x)| : x^*(x)=1, x\in S_X, x^*\in S_{X^*}\},$$ where $X^*$ is the dual space of $X$. For every integer $k\geq 1$, the \emph{$k$-polynomial numerical index} of a Banach space $X$ is the constant defined by \[ n^{(k)}(X) = \inf\{v(P) : \|P\|=1, P\in \mathcal{P}({}^k X:X)\}.\] If $k=1$, in particular, then it is called the numerical index of $X$ and we write $n(X)$. For more recent results about numerical index, see a survey paper \cite{KMP06} and references therein.

Let's briefly see the contents of the paper.
In section 2, using the variational method in \cite{DGZ93}, we show that the set $\rho A$ is a norming subset of $A$ if and only if the set of all strong peak functions in $A$ is a dense $G_\delta$ subset of $A$, when $A$ is a closed subspace $C_b(K:X)$ which contains all elements of the form $ t\mapsto (x^*f(t))^m x$ for all $x\in X$, $x^*\in X^*$, $f\in A$ and integers $m\ge 1$. Using this theorem and the variational methods, we investigate the denseness of the set of strong peak holomorphic functions and the denseness of the set of numerical strong peak functions on certain Banach spaces.

In section 3, we also apply the variation method to investigate the denseness of the set of strongly norm attaining polynomials when the set of all uniformly strongly exposed points of a Banach space $X$ is a norming subset of $\mathcal{P}({}^n X)$. As a direct corollary, the set of all points at which the norm of $\mathcal{P}({}^n X)$ is Fr\'echet differentiable is a dense $G_\delta$ subset if the set of all uniformly strongly exposed points of a Banach space $X$ is a norming subset of $\mathcal{P}({}^n X)$.

In the last part, we will use the graph theory to get some strongly norm-attaining points or complex extreme points. Reisner gave a one-to-one correspondence between n-dimensional some Banach spaces and certain graphs with n vertices. In detail he give a characterization of all finite dimensional real CL-spaces with an absolute norm usig the graph-theoretic terminology. It gives a geometric picture of extreme points of the unit ball of CL-spaces and plays an important role to find the strongly norm-attaining points of $\mathcal{P}({}^k X)$. Moreover we can find all complex extreme points on a complex CL-space with an absolute norm. These strongly norm-attaining points or complex extreme points help answering a problem about the numerical index of a Banach space. We give a partial answer to the Problem 43 in \cite{KMP06}:
\begin{quote}
\emph{Characterize the complex Banach spaces $X$ satisfying $n^{(k)}(X)=1$ for all $k \geq 2$.}
\end{quote}
We show that for a finite dimensional complex Banach space $X$ with an absolute norm, its polynomial numerical indices are one if and only if $X$ is isometric to $\ell_\infty^n$.

For later use, recall the definitions of real and complex extreme points of a unit ball. Let $X$ be a real or complex Banach space. Recall that $x\in B_X$ is said to be an {\it extreme point} of $B_X$ if whenever $y+z=2x$ for some $y,z\in B_X$, we
have $x=y=z$. Denote by $ext(B_X)$ the set of all extreme points of $B_X$. When $X$ is a complex Banach space, an element $x\in B_X$ is said to be a {\it complex
extreme point} of $B_X$ if $\sup_{0\le \theta\le 2\pi} \|x+ e^{i\theta} y\| \le 1$ for some $y\in X$ implies $y=0$. The set of all complex extreme points
 of $B_X$ is denoted by $\ext_\C(B_X)$.


\section{Denseness of the set of strong peak functions}

Let $X$ be a Banach space and $A$ a closed subspace $C_b(K:X)$. A subset $F$ of $K$ is said to be a {\it norming subset} for $A$ if for each $f\in A$, we have
\[ \|f\| = \sup\{ \|f(t)\| :t\in F\}.\] Following the definition of Globevnik in \cite{Glo79}, the smallest closed norming subset of $A$ is called the {\it Shilov boundary} for $A$ and it is shown in \cite{CLS07} that if the set of strong peak functions is dense in $A$ then the Shilov boundary of $A$ exists and it is the closure of $\rho A$. The variation method used in \cite{DGZ93} gives the partial converse of the above mentioned result.

\begin{thm}\label{thm:main}
Let $A$ be a closed subspace $C_b(K:X)$ which contains all elements of the form
$ t\mapsto (x^*f(t))^m x$ for all $x\in X$, $x^*\in X^*$, $f\in A$ and integers $m\ge 1$. Then the set $\rho A$ is a norming subset of $A$
if and only if the set of all strong peak functions in $A$ is a
dense $G_\delta$ subset of $A$.
\end{thm}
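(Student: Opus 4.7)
The plan is to exhibit a countable family of open dense subsets $V_n \subset A$ whose intersection is precisely the set of strong peak functions in $A$, and conclude via the Baire category theorem. The $(\Leftarrow)$ direction is almost immediate: given $f \in A$ and $\eps > 0$, choose a strong peak function $g \in A$ with $\|f - g\| < \eps$ and peak point $t \in \rho A$; then $\|f(t)\| \geq \|g(t)\| - \eps = \|g\| - \eps \geq \|f\| - 2\eps$, so $\rho A$ is norming.

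For the substantive direction I would set, for each $n \geq 1$,
$$V_n = \{\, f \in A : \exists\, t_0 \in K,\ \delta > 0 \text{ with } \{t \in K : \|f(t)\| > \|f\| - \delta\} \subset B(t_0, 1/n)\,\}.$$
First I would verify that $\bigcap_n V_n$ coincides with the set of strong peak functions: if $f \in \bigcap_n V_n$ with witnesses $(t_n, \delta_n)$, then any sequence $(s_k)$ with $\|f(s_k)\| \to \|f\|$ is eventually trapped in every $B(t_n, 1/n)$, forcing $(t_n)$ to be Cauchy; completeness of $K$ gives a limit $t$, at which $f$ attains its norm and peaks strongly, while the reverse inclusion is direct from the definition. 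Openness of $V_n$ is routine: if $(t_0, \delta)$ witnesses $f \in V_n$ and $\|g - f\| < \delta/4$, then $\|g(s)\| > \|g\| - \delta/2$ implies $\|f(s)\| > \|f\| - \delta$, so $(t_0, \delta/2)$ witnesses $g \in V_n$.

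The main obstacle is density of $V_n$. Given $f \in A$ and $\eps > 0$, normingness furnishes $t_0 \in \rho A$ with $\|f(t_0)\| > \|f\| - \eps/8$, together with a strong peak function $h \in A$ at $t_0$ normalized so $\|h\| = \|h(t_0)\| = 1$. By Hahn--Banach, pick $x^* \in S_{X^*}$ with $x^*(h(t_0)) = 1$; then $t \mapsto x^*h(t)$ is itself a scalar strong peak function at $t_0$, since $|x^*h(s_k)| \to 1$ forces $\|h(s_k)\| \to 1$ and hence $s_k \to t_0$. Choose $x_0 \in X$ of norm $\eps/2$ aligned so that $\|f(t_0) + x_0\| = \|f(t_0)\| + \eps/2$ (arbitrary direction if $f(t_0) = 0$). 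The closure hypothesis on $A$ lets me form
$$g(t) \;:=\; f(t) + (x^*h(t))^m\, x_0 \;\in\; A, \qquad \|g - f\| \leq \eps/2.$$
Because $x^*h$ peaks strongly at $t_0$, there is $c = c(n) < 1$ with $|x^*h(t)| \leq c$ off $B(t_0, 1/(2n))$; choosing $m$ so large that $c^m (\eps/2) < \eps/16$ yields $\|g(t_0)\| \geq \|f\| + 3\eps/8$ while $\|g(t)\| < \|f\| + \eps/16$ for $t \notin B(t_0, 1/(2n))$. Hence $\{t : \|g(t)\| > \|g\| - \eps/16\} \subset B(t_0, 1/n)$, so $g \in V_n$. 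The key difficulty---and the precise role of the algebraic closure hypothesis on $A$---is producing a member of $A$ that concentrates mass at $t_0$ without displacing $f$ by much; the $(x^*h)^m x_0$ perturbation is the vector-valued analogue of the classical Bishop-type bump, and without the hypothesis one has no way to form a vector-valued element of $A$ that localizes near $t_0$.
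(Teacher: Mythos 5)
Your proof is correct, and at its core it is the same Deville--Godefroy--Zizler-style variational argument as the paper's: both rest on the identical bump perturbation $g=f+(x^*h)^m x_0$, where $h$ is a strong peak function at a point of $\rho A$ that nearly norms $f$ and $m$ is taken large enough that $(x^*h)^m$ is uniformly small off a prescribed ball, and both conclude via Baire's theorem. The genuine difference is the choice of $G_\delta$ decomposition. The paper fixes $f$ and $\eps$ and works with sets $U_n$ of perturbations $g$ for which $f-g$ localizes its near-maximizing set near a point of $\rho A$; a point of $\bigcap_n U_n$ then gives a strong peak function $f-g$ within $\eps$ of $f$, but the fact that the strong peak functions form a $G_\delta$ set must be imported separately from Proposition~2.19 of \cite{KL07}, and the converse is quoted from \cite{CLS07}. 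Your sets $V_n$ are intrinsic (``the set where $\|f\|$ is nearly attained has diameter at most $2/n$''), so $\bigcap_n V_n$ is exactly the set of strong peak functions (up to the degenerate case where $K$ is a singleton, in which $0\in\bigcap_n V_n$ as well; discarding $0$ costs nothing since $A\setminus\{0\}$ is open and dense), and density, the $G_\delta$ property, and the easy converse all come out of one self-contained argument. Net effect: same engine, cleaner packaging, and no external citations needed for either the $G_\delta$ claim or the reverse implication.
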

\begin{proof}
Let $d$ be the complete metric on $K$. Fix $f\in A$ and
$\epsilon>0$. For each $n\ge 1$, set
\[ U_n =\Big\{ g\in A: \exists z\in \rho A \mbox{ with }
\|(f-g)(z)\| > \sup\{ \|(f-g)(x)\| : d(x,z)> 1/n\}\Big\}.\] Then $U_n$ is
open and dense in $A$. Indeed, fix $h\in A$. Since $\rho A$ is a
norming  subset of $A$, there is a point $w\in \rho A$ such that
\begin{equation}\label{neq1} \|(f-h)(w)\| > \|f-h\| -\epsilon/2. \end{equation}

Choose a peak function $q\in A$ at $w$ with $\|q(w)\| =1$ and $w^*\in S_{X^*}$ such that $w^*q(w)=1$. Then it is easy to see that $w^*\circ q$ is also a strong peak function in $C_b(K)$. So there is an integer $m\ge 1$ such that
$|w^*q(x)|^m<1/3$ for all $x\in K$ with $d(x, w)>1/n$. Now define the function by
\[ p(t) = -(w^*q(t))^m \cdot  \frac{(f-h)(w)}{\|(f-h)(w)\|},\ \ \ \  \ \ \ \forall t\in K.\]

Set $g(x) = h(x) + \epsilon\cdot p(x)$. Then
$\|f(w)-h(w) - \epsilon p(w)\| =\|f(w)-h(w)\| + \epsilon$ and
$\|g-h\|\le \epsilon$. The equation~\eqref{neq1} shows that
\begin{align*}
\|(f-g)(w)\|&=\|f(w)-h(w)-\epsilon p(w)\| = \|f(w)-h(w)\|+ \epsilon\\
&> \|f-h\| + \epsilon/2\\
&\ge \sup\{ \|(f-h)(x) - \epsilon p(x)\|: d(x,w)> 1/n\}.\\
&=\sup\{ \|(f-g)(x)\|: d(x, w)>1/n\}.
\end{align*} Therefore $g\in U_n$.

By the Baire category theorem there is a $g\in \bigcap U_n$ with
$\|g\|<\epsilon$, and we shall show that $f-g$ is a strong peak
function. Indeed, $g\in U_n$ implies that there is $z_n\in X$ such
that
\[ \|(f-g)(z_n)\|> \sup\{ \|(f-g)(x)\|: d(x,z_n)> 1/n\}.\]
Thus $d(z_p, z_n)\le 1/n$ for every $p>n$, and hence $\{z_n\}$
converges to a point $z$, say. Suppose that there is another
sequence $\{x_k\}$ in $B_X$ such that $\{\|(f-g)(x_k)\|\}_k$ converges
to $\|f-g\|$. Then for each $n\ge 1$, there is $M_n\ge 1$ such that
for every $m\ge M_n$,
\[ \|(f-g)(x_m)\|> \sup\{ \|(f-g)(x)\|: d(x,z_n)> 1/n\}.\] Then $d(x_m, z_n)\le
1/n$ for every $m\ge M_n$. Hence $\{x_m\}_m$ converges to $z$. This
shows that $f-g$ is a strong peak function at $z$.

By Proposition~2.19 in \cite{KL07}, the set of all strong peak
functions in $A$ is a $G_\delta$ subset of $A$. This proves the
necessity.

Concerning the converse, it is shown in \cite{CLS07} that if the set of
all strong peak functions is dense in $A$, then the set of all
strong peak points is a norming subset of $A$. This completes the
proof.
\end{proof}

Let $B_X$ be the unit ball of the Banach space $X$. Recall that the
point $x\in B_X$ is said to be a {\it smooth point} if there is a
unique $x^*\in B_{X^*}$ such that $\RE x^*(x)=1$. We denote by ${\rm
sm} (B_X)$ the set of all smooth points of $B_X$. We say that a
Banach space is {\it smooth} if ${\rm sm}(B_X)$ is the unit sphere
$S_X$ of $X$. The following corollary shows that if $\rho A$ is a
norming subset, then the set of smooth points of $B_A$ is dense in
$S_A$.

\begin{cor}\label{cor:main}
Suppose that $K$ is a complete metric space and $A$ is a closed subalgebra of
$C_b(K)$. If $\rho A$ is a norming subset of $A$, then the set of
all smooth points of $B_A$ contains a dense $G_\delta$ subset of
$S_A$.
\end{cor}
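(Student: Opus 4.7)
The plan is to combine Theorem~\ref{thm:main} with the observation that in a closed subalgebra of $C_b(K)$, every unit-norm strong peak function is automatically a smooth point of $B_A$; the corollary then follows almost formally.

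First I would verify the hypothesis of Theorem~\ref{thm:main}. The target space is the scalar field, so every $x \in X$ and $x^* \in X^*$ is a scalar and each function $t \mapsto (x^*f(t))^m x$ is a scalar multiple of $f^m$, which lies in $A$ because $A$ is a subalgebra. Theorem~\ref{thm:main} then yields that the set $\mathcal{S}$ of strong peak functions in $A$ is a dense $G_\delta$ subset of $A$. Since $\mathcal{S}$ is invariant under multiplication by nonzero scalars, the normalization map $f\mapsto f/\|f\|$ transfers density to $S_A$; writing $\mathcal{S}=\bigcap_n U_n$ with $U_n$ open in $A$ shows $\mathcal{S}\cap S_A=\bigcap_n(U_n\cap S_A)$ is a dense $G_\delta$ subset of $S_A$.

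The main step is to show $\mathcal{S}\cap S_A$ consists of smooth points. Fix $f\in\mathcal{S}\cap S_A$, and after multiplying by a unimodular scalar assume $f(t)=1=\|f\|$ where $t$ is the peak. I would show that the only $\phi\in B_{A^*}$ with $\RE\phi(f)=1$ is $\phi=\delta_t|_A$. Any such $\phi$ satisfies $\phi(f)=1$ and $\|\phi\|=1$. Given $g\in A$ with $g(t)=0$ and $\eps>0$, choose $\delta>0$ so that $|g|<\eps$ on the ball $B(t,\delta)$, and use the strong peak property of $f$ to pick $\eta>0$ with $|f(s)|\leq 1-\eta$ whenever $d(s,t)\geq\delta$. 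Splitting the supremum defining $\|f+\lambda g\|$ over these two regions gives $\|f+\lambda g\|\leq 1+|\lambda|\eps$ for $|\lambda|\leq\eta/\|g\|$. Combined with $|1+\lambda\phi(g)|=|\phi(f+\lambda g)|\leq\|f+\lambda g\|$ and choosing $\lambda$ with argument $-\arg\phi(g)$, this forces $|\phi(g)|\leq\eps$. Arbitrariness of $\eps$ gives $\phi(g)=0$. For a general $h\in A$, the function $h-h(t)f$ lies in $A$ and vanishes at $t$, so $\phi(h)=h(t)\phi(f)=h(t)$, which uniquely determines $\phi$.

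The main obstacle is the split estimate bounding $\|f+\lambda g\|$; it is routine but must be set up so that the first-order term in $\lambda$ comes purely from $g$ evaluated near the peak rather than from $f$ itself. Once that is clean, combining the inclusion $\mathcal{S}\cap S_A\subseteq{\rm sm}(B_A)$ with the dense $G_\delta$ statement of the second paragraph completes the proof.
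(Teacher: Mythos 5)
Your proposal is correct and follows the same route as the paper: verify the hypothesis of Theorem~\ref{thm:main} (which holds for a subalgebra since the required functions $t\mapsto (x^*f(t))^m x$ reduce to scalar multiples of $f^m$), and then use the fact that every unit-norm strong peak function is a smooth point of $B_A$. The paper simply cites \cite{CLS07} for that last fact, whereas you supply a correct direct proof via the split estimate $\|f+\lambda g\|\le 1+|\lambda|\eps$ forcing $\phi=\delta_t|_A$; your normalization step transferring the dense $G_\delta$ property from $A$ to $S_A$ is also handled correctly.
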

\begin{proof}
It is shown in \cite{CLS07} that if $f\in A$ is a strong peak function
and $\|f\|=1$, then $f$ is a smooth point of $B_A$. Then
Theorem~\ref{thm:main} completes the proof.
\end{proof}

Recall  that a Banach space $X$ is said to be a locally uniformly convex if whenever there is a sequence $\{x_n\} \in B_X$  with
 $\lim_n \|x_n + x\|=2$ for some $x\in S_X$, we have $\lim_n \|x_n - x\|=0$.
It is shown in \cite{CLS07} that if $X$ is locally uniformly convex, then
the set of norm attaining elements is dense in $\mathcal{A}(B_X)$.
That is, the set consisting of $f\in \mathcal{A}(B_X)$ with $\|f\|=
|f(x)|$ for some $x\in B_X$ is dense in $\mathcal{A}(B_X)$.

The following corollary gives a stronger result. Notice that if $X$
is locally uniformly convex, then every point of $S_X$ is the strong
peak point for $\mathcal{A}(B_X)$ \cite{Glo79}. Theorem~\ref{thm:main} and
Corollary~\ref{cor:main} implies the following.
\begin{cor}\label{cor:localconvex}
Suppose that $X$ is a  locally uniformly convex, complex Banach
space. Then the set of all strong peak functions in
$\mathcal{A}(B_X)$ is a dense $G_\delta$ subset of
$\mathcal{A}(B_X)$. In particular, the set of all smooth points of
$B_{\mathcal{A}(B_X)}$ contains a dense $G_\delta$ subset of
$S_{\mathcal{A}(B_X)}$.
\end{cor}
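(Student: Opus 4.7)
The plan is to verify the hypotheses of Theorem~\ref{thm:main} for $K=B_X$ (complete in the norm metric) and $A=\mathcal{A}(B_X)$, and then to quote Corollary~\ref{cor:main} for the smooth point statement.

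First I would check the algebraic stability condition in Theorem~\ref{thm:main}. Since the target space here is the scalar field $\mathbb{C}$, the prescribed functions $t\mapsto (x^{*}f(t))^{m}x$ reduce to scalar multiples of $f^{m}$. Because $\mathcal{A}(B_X)$ is a closed subalgebra of $C_b(B_X)$, it is closed under all positive integer powers, so this hypothesis is automatic for both the $\mathcal{A}_u$ and $\mathcal{A}_b$ versions.

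Next I would show that $\rho\mathcal{A}(B_X)$ is a norming subset of $\mathcal{A}(B_X)$. By a theorem of Globevnik \cite{Glo79}, local uniform convexity of $X$ guarantees that every $x\in S_X$ is a strong peak point for $\mathcal{A}(B_X)$; indeed, if $x^{*}\in S_{X^{*}}$ norms $x$, then $f(y)=\tfrac{1}{2}(1+x^{*}(y))$ is a strong peak function at $x$, since $|f(y_n)|\to 1$ forces $x^{*}(y_n)\to 1$, hence $\|y_n+x\|\to 2$, and local uniform convexity yields $y_n\to x$. Thus $S_X\subset\rho\mathcal{A}(B_X)$. On the other hand, because every $f\in\mathcal{A}(B_X)$ is holomorphic on the open unit ball and continuous on $B_X$, restricting to complex lines through the origin and applying the classical one-dimensional maximum modulus principle gives $\|f\|=\sup_{x\in S_X}|f(x)|$. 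Consequently $S_X$, and hence $\rho\mathcal{A}(B_X)$, is norming.

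With both hypotheses of Theorem~\ref{thm:main} verified, I would conclude that the set of strong peak functions is a dense $G_{\delta}$ subset of $\mathcal{A}(B_X)$. The statement on smooth points then follows immediately from Corollary~\ref{cor:main}. The only point requiring care in this argument is the scalar reduction of the $(x^{*}f)^{m}x$ condition (with the ``$X$'' of Theorem~\ref{thm:main} specialized to $\mathbb{C}$) and the precise citation of Globevnik's peak point result; no additional variational argument is needed, as the analytic content is carried entirely by Theorem~\ref{thm:main}.
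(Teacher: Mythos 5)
Your proposal is correct and follows essentially the same route as the paper: the paper likewise cites Globevnik's result that local uniform convexity makes every point of $S_X$ a strong peak point for $\mathcal{A}(B_X)$ and then invokes Theorem~\ref{thm:main} and Corollary~\ref{cor:main}. You simply make explicit the two details the paper leaves implicit --- the scalar reduction of the $(x^*f)^m x$ hypothesis to powers of $f$ in the subalgebra, and the maximum modulus argument showing $S_X$ is norming --- both of which are correct.
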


It is shown \cite{CHL07} that the set of all strong peak points for
$\mathcal{A}(B_X)$ is dense in $S_X$ if $X$ is an order continuous
locally uniformly $c$-convex sequence space. (For the definition see
\cite{CHL07}). Then by  Theorem~\ref{thm:main}, we get the denseness
of the set of all strong peak functions.

\begin{cor}\label{cor:complexconvex2}
Let $X$ be an order continuous locally uniformly $c$-convex Banach
sequence space. Then the set of all strong peak functions in
$\mathcal{A}_u(B_X:X)$ is a dense $G_\delta$ subset of
$\mathcal{A}_u(B_X:X)$.
\end{cor}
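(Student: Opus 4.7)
The plan is to apply Theorem~\ref{thm:main} with $K=B_X$ and $A=\mathcal{A}_u(B_X:X)$. Closedness of $A$ in $C_b(B_X:X)$ is standard, and the algebraic closure condition is immediate: for $f\in A$, $x^*\in X^*$, $x\in X$, and $m\geq 1$, the composition $x^*\circ f$ is a scalar-valued, holomorphic and uniformly continuous function on $B_X$, so $(x^*\circ f)^m\in \mathcal{A}_u(B_X)$, and multiplication by the constant vector $x$ keeps the product in $\mathcal{A}_u(B_X:X)$. Thus the only nontrivial task is to verify that $\rho A$ is a norming subset of $A$, after which the conclusion is immediate from Theorem~\ref{thm:main}.

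For this I would invoke the scalar result from \cite{CHL07} recalled just before the corollary: under the stated hypotheses on $X$, the set $\rho\mathcal{A}_u(B_X)$ of strong peak points for the scalar algebra is dense in $S_X$. These scalar peak points lift to vector peak points: given $w\in \rho\mathcal{A}_u(B_X)$ with a strong peak function $g\in \mathcal{A}_u(B_X)$ at $w$, fix any $v\in S_X$ and set $f(t)=g(t)\,v$. Then $f\in \mathcal{A}_u(B_X:X)$, $\|f\|=\|g\|=\|f(w)\|$, and any sequence $\{t_n\}\subset B_X$ with $\|f(t_n)\|\to\|f\|$ must satisfy $|g(t_n)|\to\|g\|$, whence $t_n\to w$ because $g$ is a strong peak function at $w$. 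This shows $\rho\mathcal{A}_u(B_X)\subset \rho\mathcal{A}_u(B_X:X)$.

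To conclude that $\rho A$ is norming, I would use the vector-valued maximum modulus principle, obtained by applying the scalar version to each $x^*\circ f$ and supping over $x^*\in S_{X^*}$, to get $\|f\|=\sup_{t\in S_X}\|f(t)\|$ for every $f\in \mathcal{A}_u(B_X:X)$. The density of $\rho\mathcal{A}_u(B_X)$ in $S_X$ together with the continuity of $t\mapsto \|f(t)\|$ then yields $\|f\|=\sup\{\|f(t)\|:t\in \rho\mathcal{A}_u(B_X)\}$, and a fortiori the same holds when one sups over $\rho\mathcal{A}_u(B_X:X)$. Theorem~\ref{thm:main} then delivers the dense $G_\delta$ conclusion. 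I expect the scalar-to-vector lift for the peak-point set, and the attendant maximum modulus argument, to be the main conceptual step, the remaining hypotheses of Theorem~\ref{thm:main} being essentially formal.
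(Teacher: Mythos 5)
Your proposal is correct and follows essentially the same route as the paper, which simply cites the result of \cite{CHL07} that the strong peak points of the scalar algebra $\mathcal{A}(B_X)$ are dense in $S_X$ and then invokes Theorem~\ref{thm:main}. The details you supply --- verifying the closure hypothesis of Theorem~\ref{thm:main}, lifting scalar strong peak points to $\rho\mathcal{A}_u(B_X:X)$ via $f(t)=g(t)v$, and using the maximum modulus principle plus density in $S_X$ to get the norming property --- are exactly the (unstated) steps the paper's one-line argument relies on.
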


Let $\Pi(X)=\{(x,x^*)\in B_X\times B_{X^*}: x^*(x)=1\}$ be the
topological subspace of $B_X\times B_{X^*}$, where $B_X$ (resp. $B_{X^*}$) is equipped with norm (resp. weak-$*$ compact) topology.

The numerical radius of holomorphic functions was deeply studied in
\cite{Har71} and the denseness of numerical radius holomorphic functions
is studied on the classical Banach spaces \cite{AK07}. Recently the
{\it numerical  strong peak function} is introduced in \cite{KL07} and
the denseness of holomorphic numerical strong peak functions in
$ \mathcal{A}(B_X:X)$ is studied in various Banach spaces. The
function $f\in \mathcal{A}(B_X:X)$ is said to be a numerical strong
peak function if there is $(x,x^*)$ such that $\lim_n
|x^*_nf(x_n)|=v(f)$ for some $\{ ( x_n, x_n^*)\}_n$ in $\Pi(X)$
implies that $(x_n, x_n^*)$ converges to $(x, x^*)$ in $\Pi(X)$. The
function $f\in \mathcal{A}(B_X:X)$ is said to be {\it numerical
radius attaining} if there is $(x, x^*)$ in $\Pi(X)$ such that $v(f)
= |x^*f(x)|$.  An element in the intersection of the set of all strong
peak functions and the set of all numerical strong peak functions is
called a {\it norm and numerical strong peak function} of
$\mathcal{A}(B_X:X)$. Using the variational method again we obtain the following.

\begin{prop}\label{prop:numerical}
Suppose that the set $\Pi(X)$ is complete metrizable and the set
$\Gamma= \{ (x, x^*)\in \Pi(X) : x\in \rho \mathcal{A}(B_X)\cap {\rm
sm}(B_X)\}$ is a numerical boundary. That is, $v(f)
=\sup_{(x,x^*)\in \Gamma} |x^*f(x)|$  for each $f\in
\mathcal{A}(B_X:X)$.  Then the set of all numerical strong peak
functions in $\mathcal{A}(B_X:X)$ is a dense $G_\delta$ subset of
$\mathcal{A}(B_X:X)$.
\end{prop}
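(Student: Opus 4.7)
The plan is to reproduce the variational argument of Theorem~\ref{thm:main} with $K$ replaced by the complete metric space $\Pi(X)$ and the pointwise norm $x\mapsto\|f(x)\|$ replaced by the numerical quantity $(x,x^*)\mapsto|x^*f(x)|$. Fix $f\in\mathcal{A}(B_X:X)$ and $\epsilon>0$. For each $n\geq 1$ I would set
\begin{equation*}
V_n=\Big\{g\in\mathcal{A}(B_X:X):\exists (z,z^*)\in\Gamma,\ |z^*(f-g)(z)|>\sup\{|x^*(f-g)(x)|:(x,x^*)\in\Pi(X),\ d((x,x^*),(z,z^*))>1/n\}\Big\},
\end{equation*}
which is open since both sides of the defining inequality depend $1$-Lipschitz continuously on $g$. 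The numerical boundary hypothesis on $\Gamma$ is precisely what lets the density argument begin at a point of $\Gamma$ almost attaining $v(f-h)$.

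For density of $V_n$, given $h\in\mathcal{A}(B_X:X)$ I pick $(w,w^*)\in\Gamma$ with $|w^*(f-h)(w)|>v(f-h)-\epsilon/2$ and a strong peak function $q\in\mathcal{A}(B_X)$ at $w$ normalized so that $q(w)=\|q\|=1$, and define
\begin{equation*}
g(x)=h(x)-\epsilon\,q(x)^m\lambda w
\end{equation*}
for a large integer $m$ and a unimodular scalar $\lambda=\mu/|\mu|$, where $\mu=w^*(f-h)(w)$ (or any unimodular $\lambda$ if $\mu=0$). Then $g\in\mathcal{A}(B_X:X)$, $\|g-h\|\leq\epsilon$, and at $(w,w^*)$ one computes $|w^*(f-g)(w)|=|\mu|+\epsilon$, while at any $(x,x^*)\in\Pi(X)$ one has $|x^*(f-g)(x)|\leq v(f-h)+\epsilon|q(x)|^m$.

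The crux is to force $|q(x)|^m$ small on $\{(x,x^*)\in\Pi(X):d((x,x^*),(w,w^*))>1/n\}$. Here I exploit that $w\in{\rm sm}(B_X)$: the metric $d$ is compatible with the product of the norm topology on $B_X$ and the weak-$*$ topology on $B_{X^*}$ restricted to $\Pi(X)$, so weak-$*$ compactness of $B_{X^*}$ together with uniqueness of the support functional at $w$ forces every $(x_k,x_k^*)\in\Pi(X)$ with $x_k\to w$ in norm to satisfy $x_k^*\to w^*$ weak-$*$, hence $(x_k,x_k^*)\to(w,w^*)$ in $d$. Contrapositively, the $x$-projection of the set above stays norm-separated from $w$, and since $q$ strongly peaks at $w$ we have $|q|\leq\eta_n<1$ there. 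Taking $m$ large enough that $\epsilon\eta_n^m<\epsilon/2<|\mu|+\epsilon-v(f-h)$ yields $g\in V_n$. This transfer from weak-$*$ control on the second coordinate to norm control on the first is the main technical obstacle.

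Once each $V_n$ is open and dense, Baire's theorem in $\mathcal{A}(B_X:X)$ produces $g\in\bigcap_n V_n$ with $\|g\|<\epsilon$. The witnesses $(z_n,z_n^*)\in\Gamma$ satisfy $d((z_p,z_p^*),(z_n,z_n^*))\leq 1/n$ whenever $p>n$, since otherwise the $V_n$- and $V_p$-defining inequalities together yield the absurd chain $|z_n^*(f-g)(z_n)|<|z_p^*(f-g)(z_p)|<|z_n^*(f-g)(z_n)|$, exactly as in the proof of Theorem~\ref{thm:main}. Completeness of $\Pi(X)$ delivers a limit $(z,z^*)$, and any sequence $(y_k,y_k^*)\in\Pi(X)$ with $|y_k^*(f-g)(y_k)|\to v(f-g)$ must eventually lie in each $1/n$-ball around $(z_n,z_n^*)$, so it converges to $(z,z^*)$. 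Hence $f-g$ is a numerical strong peak function within $\epsilon$ of $f$, and the $G_\delta$ character of the set of numerical strong peak functions follows by an argument parallel to Proposition~2.19 of \cite{KL07}.
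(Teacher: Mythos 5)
Your proposal is correct and follows essentially the same variational argument as the paper: the same open dense sets $U_n$, the same perturbation $h\mapsto h-\epsilon\,q^m\lambda w$ built from a strong peak function at a point of $\Gamma$ nearly attaining $v(f-h)$, and the same Baire/Cauchy conclusion. In fact you supply a justification (via smoothness of $w$ and weak-$*$ compactness of $B_{X^*}$) for the step that $d\bigl((x,x^*),(w,w^*)\bigr)>1/n$ forces $\|x-w\|\geq\delta_n$, which the paper only asserts with a ``Notice that''.
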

\begin{proof}
Let $A=\mathcal{A}(B_X:X)$ and let $d$ be a complete metric on
$\Pi(X)$. In \cite{KL07}, it is shown that if $\Pi(X)$ is complete
metrizable, then the set of all numerical peak functions in $A$ is a
$G_\delta$ subset of $A$. We need prove the denseness. Let $f\in A$
and $\epsilon>0$. Fix $n\ge 1$ and set
\begin{align*} U_n =\{ g\in A&: \exists (z,z^*)\in
\Gamma \mbox{ with }\\
&|z^*(f-g)(z)| > \sup\{ |x^*(f-g)(x)| : d((x,x^*),(z,z^*))> 1/n\}
\}.\end{align*}Then $U_n$ is open and dense in $A$. Indeed, fix
$h\in A$. Since $\Gamma$ is a numerical boundary for $A$, there is a
point $(w, w^*)\in \Gamma$ such that
\[ |w^*(f-h)(w)| > v(f-h) -\epsilon/2.\]
Notice that $d((x,x^*), (w,w^*))>1/n$ implies that there is
$\delta_n>0$ such that $\|x-w\|>\delta_n$. Choose a peak function
$p\in \mathcal{A}(B_X)$ such that $\|p\|=1=|p(w)|$ and $|p(x)|<1/3$
for $\|x-w\|>\delta_n$ and $|w^*(f-h)(w) - \epsilon p(w)|
=|w^*f(w)-w^*h(w)| + \epsilon|p(w)|= |w^*f(w)-w^*h(w)| + \epsilon$.

Put $g(x) = h(x) + \epsilon\cdot p(x)w$. Then
\begin{align*}
|w^*(f-g)(w)|&=|w^*f(w)-w^*h(w)-\epsilon p(w)|= |w^*f(w)-w^*h(w)|+ \epsilon\\
&>v(f-h) + \epsilon/2\\
&\ge \sup\{ |x^*(f-h)(x) - \epsilon p(x)x^*(w)|: d((x,x^*), (w,w^*))> 1/n\}.\\
&=\sup\{ |x^*(f-g)(x)|: d((x,x^*), (w,w^*))>1/n\}.
\end{align*} That is, $g\in U_n$.

By the Baire category theorem there is a $g\in \bigcap U_n$ with
$\|g\|<\epsilon$, and we shall show that $f-g$ is a strong peak
function. Indeed, $g\in U_n$ implies that there is $(z_n, z_n^*)\in
\Gamma$ such that
\[ |z_n^*(f-g)(z_n)|> \sup\{ |x^*(f-g)(x)|: d((x,x^*), (z_n,z_n^*))> 1/n\}.\]
Thus $d((z_p,z_p^*), (z_n,z_n^*))\le 1/n$ for every $p>n$, and hence
$\{(z_n, z_n^*)\}$ converges to a point $(z, z^*)$, say. Suppose
that there is another sequence $\{(x_k, x_k^*)\}$ in $\Pi(X)$ such
that $\{|x_k^*(f-g)(x_k)|\}$ converges to $v(f-g)$. Then for each
$n\ge 1$, there is $M_n\ge 1$ such that for every $m\ge M_n$,
\[ |x_m^*(f-g)(x_m)|> \sup\{ |x^*(f-g)(x)|: d((x,x^*), (z_n, z_n^*))> 1/n\}.\]
Then $d((x_m, x_m^*), (z_n, z_n^*))\le 1/n$ for every $m\ge M_n$.
Hence $\{(x_m, x_m^*)\}$ converges to $(z, z^*)$. This shows that
$f-g$ is a numerical strong peak function at $(z, z^*)$.
\end{proof}

\begin{cor}
Suppose that $X$ is separable, smooth and locally uniformly convex.
Then the set of norm and numerical strong peak functions is a dense
$G_\delta$ subset of $\mathcal{A}_u(B_X:X)$.
\end{cor}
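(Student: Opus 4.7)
The plan is to combine a vector-valued analogue of Corollary~\ref{cor:localconvex} with Proposition~\ref{prop:numerical}, and conclude via the Baire category theorem in the complete metric space $\mathcal{A}_u(B_X:X)$.

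First I would show that the set of strong peak functions in $\mathcal{A}_u(B_X:X)$ is a dense $G_\delta$. Local uniform convexity of $X$ implies, by Globevnik's theorem, that every $x\in S_X$ is a strong peak point for $\mathcal{A}(B_X)$; if $f\in \mathcal{A}(B_X)$ peaks strongly at $x$ with $\|f\|=f(x)=1$ and $y\in S_X$, then the vector-valued tensor $t\mapsto f(t)\,y$ lies in $\mathcal{A}_u(B_X:X)$ and is a strong peak function there at $x$. Hence $S_X\subseteq \rho\,\mathcal{A}_u(B_X:X)$, so $\rho\,\mathcal{A}_u(B_X:X)$ is a norming subset. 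Since $\mathcal{A}_u(B_X:X)$ is closed under the operation $f\mapsto (x^*f)^m\cdot x$, Theorem~\ref{thm:main} applies and yields the desired dense $G_\delta$ of strong peak functions.

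Next I would apply Proposition~\ref{prop:numerical}. Separability of $X$ makes $(B_{X^*},w^*)$ compact metrizable, so $B_X\times B_{X^*}$, equipped with the product of the norm and weak-$*$ metrics, is complete; the set $\Pi(X)$ is closed in this product because the evaluation $(x,x^*)\mapsto x^*(x)$ is jointly continuous for the norm--weak-$*$ topology on $B_X\times B_{X^*}$. Thus $\Pi(X)$ is complete metrizable. Smoothness of $X$ gives $S_X\subseteq \mathrm{sm}(B_X)$, local uniform convexity gives $S_X\subseteq \rho\,\mathcal{A}(B_X)$, and every $(x,x^*)\in\Pi(X)$ has $x\in S_X$; consequently $\Gamma=\Pi(X)$, which is trivially a numerical boundary for $\mathcal{A}_u(B_X:X)$. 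Proposition~\ref{prop:numerical} then produces a dense $G_\delta$ set of numerical strong peak functions in $\mathcal{A}_u(B_X:X)$.

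Both sets being dense $G_\delta$ in the Baire space $\mathcal{A}_u(B_X:X)$, their intersection---the set of norm and numerical strong peak functions---is again a dense $G_\delta$. The only slightly delicate bookkeeping is in the first step: Corollary~\ref{cor:localconvex} is stated for the scalar algebra $\mathcal{A}(B_X)$, so to handle the vector-valued algebra $\mathcal{A}_u(B_X:X)$ one must re-invoke Theorem~\ref{thm:main} directly, using the tensoring trick above to promote a scalar strong peak function to a vector-valued one. Everything else is a routine verification of the hypotheses of Proposition~\ref{prop:numerical}.
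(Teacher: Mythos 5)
Your proof is correct, and its overall skeleton is the same as the paper's: verify the hypotheses of Proposition~\ref{prop:numerical} to get a dense $G_\delta$ of numerical strong peak functions, obtain a dense $G_\delta$ of strong peak functions from Theorem~\ref{thm:main}, and intersect the two in the Baire space $\mathcal{A}_u(B_X:X)$. The difference lies in how the hypotheses are discharged. The paper cites \cite{KL07} for the complete metrizability of $\Pi(X)$ and \cite[Theorem~2.5]{Pal04} for $\Gamma$ being a numerical boundary, and then simply quotes Corollary~\ref{cor:localconvex} for the norm-peak half; you instead prove the metrizability directly (norm $\times$ weak-$*$ product metric, with $\Pi(X)$ closed by joint continuity of $(x,x^*)\mapsto x^*(x)$), observe that smoothness and local uniform convexity force $\Gamma=\Pi(X)$ (any $(x,x^*)\in\Pi(X)$ automatically has $\|x\|=1$), so the numerical boundary condition is trivial rather than an appeal to Palacios, and you obtain the vector-valued statement by tensoring a scalar strong peak function with a unit vector and re-invoking Theorem~\ref{thm:main} for $A=\mathcal{A}_u(B_X:X)$. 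That last step is a genuine gain in precision: Corollary~\ref{cor:localconvex} as stated concerns the scalar algebra $\mathcal{A}(B_X)$, while the corollary needs strong peak functions in $\mathcal{A}_u(B_X:X)$, and your tensoring argument (together with the maximum-modulus fact that $S_X$ is norming, which the paper also uses tacitly) supplies exactly what is missing. In short, the paper's proof is shorter because it leans on external results; yours is self-contained and closes the small scalar-versus-vector gap.
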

\begin{proof}
It is shown \cite{KL07} that if $X$ is separable, $\Pi(X)$ is complete
metrizable. In view of \cite[Theorem~2.5]{Pal04}, $\Gamma$ is a
numerical boundary for $\mathcal{A}_u(B_X:X)$. Hence
Proposition~\ref{prop:numerical} shows that the set of all numerical
strong peak functions is dense in $\mathcal{A}_u(B_X:X)$. Finally,
Corollary~\ref{cor:localconvex} implies that the set of all norm and
numerical peak functions is a dense $G_\delta$ subset of
$\mathcal{A}_u(B_X:X)$.
\end{proof}

\begin{cor}
Let $X$ be an order continuous locally uniformly $c$-convex, smooth
Banach sequence space. Then the set of all norm and numerical strong
peak functions in $\mathcal{A}_u(B_X:X)$ is a dense $G_\delta$
subset of $\mathcal{A}_u(B_X:X)$.
\end{cor}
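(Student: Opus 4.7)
The plan is to verify the hypotheses of Proposition~\ref{prop:numerical} for $\mathcal{A}_u(B_X{:}X)$, apply it to obtain numerical strong peak denseness, and intersect with the conclusion of Corollary~\ref{cor:complexconvex2}. Since a Banach sequence space that is order continuous is separable, the cited result from \cite{KL07} shows that $\Pi(X)$ is complete metrizable in the product of the norm topology on $B_X$ and the weak-$*$ topology on $B_{X^*}$. Smoothness of $X$ gives $\mathrm{sm}(B_X)=S_X$, so the relevant set reduces to $\Gamma=\{(x,x^*)\in\Pi(X):x\in\rho\mathcal{A}(B_X)\}$ where, by smoothness, $x^*$ is the unique norming functional at $x$.

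Next I would verify that $\Gamma$ is a numerical boundary for $\mathcal{A}_u(B_X{:}X)$. By \cite{CHL07}, the hypothesis that $X$ is order continuous locally uniformly $c$-convex implies $\rho\mathcal{A}(B_X)$ is dense in $S_X$; coupled with the smoothness of $X$, the map $x\mapsto(x,x^*)$ is continuous from $S_X$ into $\Pi(X)$ (a standard consequence of smoothness plus norm-to-weak-$*$ continuity of the duality map). Then a routine approximation argument, essentially that of \cite[Theorem~2.5]{Pal04} carried over to the $c$-convex setting, shows $v(f)=\sup_{(x,x^*)\in\Gamma}|x^*f(x)|$ for every $f\in\mathcal{A}_u(B_X{:}X)$: one picks $(y,y^*)\in\Pi(X)$ with $|y^*f(y)|$ close to $v(f)$, approximates $y$ by $x\in\rho\mathcal{A}(B_X)$, and uses continuity of the duality map together with uniform continuity of $f$ to transfer the estimate to $(x,x^*)\in\Gamma$.

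Once $\Gamma$ is shown to be a numerical boundary, Proposition~\ref{prop:numerical} yields that the set $\mathcal{N}$ of numerical strong peak functions is a dense $G_\delta$ subset of $\mathcal{A}_u(B_X{:}X)$. Simultaneously, Corollary~\ref{cor:complexconvex2} already supplies that the set $\mathcal{S}$ of (norm) strong peak functions is a dense $G_\delta$ subset of $\mathcal{A}_u(B_X{:}X)$. Since $\mathcal{A}_u(B_X{:}X)$ is a Banach space, the Baire category theorem implies $\mathcal{N}\cap\mathcal{S}$ is again a dense $G_\delta$, and by definition this intersection is exactly the set of norm and numerical strong peak functions.

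The only step requiring real care is the verification that $\Gamma$ is a numerical boundary in the $c$-convex, smooth sequence-space setting; once smoothness is available the argument of Palmer essentially goes through, but one must check that the continuous dependence $x\mapsto x^*$ combined with uniform continuity on $B_X$ is enough to transfer the numerical-radius supremum from arbitrary points of $\Pi(X)$ to the smaller set $\Gamma$. All remaining steps are formal applications of the earlier results.
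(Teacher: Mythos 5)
Your proposal is correct and follows essentially the same route as the paper: separability from order continuity gives complete metrizability of $\Pi(X)$, the numerical boundary property of $\Gamma$ comes from the denseness of strong peak points together with \cite[Theorem~2.5]{Pal04}, Proposition~\ref{prop:numerical} handles the numerical strong peak part, Corollary~\ref{cor:complexconvex2} (i.e.\ Theorem~\ref{thm:main}) handles the norm strong peak part, and Baire's theorem gives the intersection. The only difference is that you sketch the verification that $\Gamma$ is a numerical boundary, whereas the paper simply cites Palacios' result.
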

\begin{proof}
Notice that $X$ is separable since $X$ is order continuous. Hence
the set of all smooth points of $B_X$ is dense in $S_X$ by the Mazur
theorem and $\Pi(X)$ is complete metrizable \cite{KL07}. In view of
\cite[Theorem~2.5]{Pal04}, $\Gamma$ is a numerical boundary for
$\mathcal{A}_u(B_X:X)$. Hence Proposition~\ref{prop:numerical} shows
that the set of all numerical strong peak functions is a dense
$G_\delta$ subset of $\mathcal{A}_u(B_X:X)$. Theorem~\ref{thm:main}
also shows that the set of all strong peak functions is a dense
$G_\delta$ subset of $\mathcal{A}_u(B_X:X)$. This completes the
proof.
\end{proof}

\section{Denseness of strongly norm attaining polynomials}

Recall that the norm $\|\cdot \|$ of a Banach space is said to be {\it
Fr\'echet differentiable } at $x\in X$ if \[ \lim_{\delta\to 0} \ \sup_{\|y\|=1} \frac{\|x+\delta y\| +
\|x-\delta y\| -2\|x\|}{\delta}=0.\] It is well-known that the set
of Fr\'echet differentiable points in a Banach space is a $G_\delta$
subset \cite[Proposition~4.16]{BL00}. Ferrera \cite{Fer02} shows that in a
real Banach space $X$, the norm of $\mathcal{P}({}^n X)$ is
Fr\'echet differentiable at $Q$ if and only if $Q$ strongly attains
its norm.

 A set $\{x_{\alpha}\}$ of points on the unit sphere $S_X$ of $X$
is called {\it uniformly strongly exposed} (u.s.e.), if there are a
function $\delta(\epsilon)$ with $\delta(\epsilon)>0$ for every
$\epsilon
>0$, and a set $\{f_{\alpha}\}$ of elements of norm $1$ in $X^*$ such that
for every $\alpha$, $ f_{\alpha}(x_{\alpha}) = 1$, and for any $x$,
$$\|x\|\le 1 ~\text{and}~ \RE f_{\alpha}(x) \ge 1- \delta(\epsilon)
~\text{imply}~ \|x-x_{\alpha}\| \le \epsilon.$$

Lindenstrauss \cite[Proposition 1]{Lin63} showed that if $B_X$ is the
closed convex hull of a set of u.s.e. points, then $X$ has property
$A$, that is, for every Banach space $Y$, the set of norm-attaining
elements is dense in $\mathcal{L}(X,Y)$, the Banach space of all
bounded operators from $X$ into $Y$. The following theorem gives
stronger result.

\begin{thm}Let $\mathbb{F}$ be the real or complex scalar field and
$X$, $Y$ Banach spaces over $\mathbb{F}$. For $k\ge 1$, suppose that the u.s.e. points $\{x_\alpha\}$ in $S_X$ is a norming
subset of $\mathcal{P}({}^k X)$. Then the set of strongly norm
attaining elements in $\mathcal{P}({}^k X:Y)$ is dense. In
particular, the set of all points at which the norm of
$\mathcal{P}({}^n X)$ is Fr\'echet differentiable is a dense
$G_\delta$ subset.\end{thm}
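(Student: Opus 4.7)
The plan is to mimic the variational argument of Theorem~\ref{thm:main}, working in $\mathcal{P}({}^kX:Y)$ in place of the algebra $A\subset C_b(K:X)$, and using the u.s.e.\ hypothesis to fabricate a ``peak polynomial'' of degree exactly $k$. The central new subtlety is that strong norm attainment of a $k$-homogeneous polynomial is defined only up to multiplication by a scalar $\lambda$ with $|\lambda|=1$, so the exceptional sets in the auxiliary open dense sets must be parametrized by the orbits $\mathbb{T}x_\alpha:=\{\lambda x_\alpha:|\lambda|=1\}$ rather than by the points $x_\alpha$ themselves.

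Fix $P\in\mathcal{P}({}^kX:Y)$ and $\varepsilon>0$. For each $n\geq 1$, set
\[
U_n=\Bigl\{Q\in\mathcal{P}({}^kX:Y):\exists\,\alpha,\ \|(P-Q)(x_\alpha)\|>\sup\bigl\{\|(P-Q)(x)\|:x\in B_X,\ \inf_{|\lambda|=1}\|x-\lambda x_\alpha\|>1/n\bigr\}\Bigr\}.
\]
Openness is immediate by continuity. For density, given $R\in\mathcal{P}({}^kX:Y)$ and $\eta\in(0,\varepsilon)$, I would first exploit the \emph{uniformity} of the modulus $\delta(\cdot)$ in the u.s.e.\ definition to extract $\rho=\rho(n)\in(0,1)$ \emph{independent of $\alpha$} such that $\|x\|\leq 1$ and $\inf_{|\lambda|=1}\|x-\lambda x_\alpha\|>1/n$ force $|f_\alpha(x)|\leq\rho$: if $|f_\alpha(x)|$ were close to $1$, the rotated point $e^{-i\arg f_\alpha(x)}x$ would have $\RE f_\alpha$ close to $1$, and u.s.e.\ would put it within $1/n$ of $x_\alpha$. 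Next, using the norming hypothesis, pick $x_\alpha$ with $\|(P-R)(x_\alpha)\|>\|P-R\|-\gamma$ for some $\gamma<\eta(1-\rho^k)$ and set
\[
Q(x)=R(x)-\eta(f_\alpha(x))^k v,\qquad v=\frac{(P-R)(x_\alpha)}{\|(P-R)(x_\alpha)\|}
\]
(taking any unit $v\in Y$ in the degenerate case $(P-R)(x_\alpha)=0$). Then $Q$ is $k$-homogeneous with $\|Q-R\|=\eta$, the triangle equality gives $\|(P-Q)(x_\alpha)\|=\|(P-R)(x_\alpha)\|+\eta$, and the bound $\|(P-Q)(x)\|\leq\|P-R\|+\eta\rho^k$ on the exceptional set---together with the choice of $\gamma$---certifies $Q\in U_n$.

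By Baire, there is $Q\in\bigcap_nU_n$ with $\|Q\|<\varepsilon$. For any sequence $\{y_j\}\subset B_X$ with $\|(P-Q)(y_j)\|\to\|P-Q\|$ and each fixed $n$, eventually $y_j$ fails to lie in the exceptional set associated to the witness $z_n:=x_{\alpha_n}$ of $Q\in U_n$, so for $j,j'\geq J_n$ both $y_j$ and $y_{j'}$ lie within $1/n$ of $\mathbb{T}z_n$ and hence within $2/n$ of each other modulo $\mathbb{T}$. Thus $\{[y_j]\}$ is Cauchy in the quotient $B_X/\mathbb{T}$; completeness of $B_X$ together with compactness of $\{\lambda:|\lambda|=1\}$ then yield a subsequence $\{y_{j_\ell}\}$ converging to $\lambda z$ for some $|\lambda|=1$ and $z\in B_X$. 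This is precisely strong norm attainment of $P-Q$ at $z$. For the final claim, combine this density (taking $Y$ the scalar field) with Ferrera's result \cite{Fer02} equating strong norm attainment in $\mathcal{P}({}^kX)$ with Fréchet differentiability of its norm, and the standard fact \cite[Proposition~4.16]{BL00} that the set of Fréchet differentiability points of a continuous convex function is a $G_\delta$.

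The main obstacle is clean bookkeeping of the $\mathbb{T}$-action throughout: verifying that u.s.e.\ controls points up to the full unit circle (not merely up to sign, as a naive reading of $\RE f_\alpha$ might suggest), defining $U_n$ so that $k$-homogeneity and unimodular equivalence cooperate rather than conflict, and extracting a single limit $\lambda z$ from the Cauchy-modulo-rotation structure without losing the $\alpha$-uniformity of $\delta$, which is exactly what permits $\rho(n)$ to be a single real number rather than an $\alpha$-dependent quantity.
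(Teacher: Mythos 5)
Your variational construction for the density statement is essentially the paper's own proof: the same sets $U_n$ (with exceptional sets measured by $\inf_{|\lambda|=1}\|x-\lambda x_\alpha\|$), the same perturbation $R\mapsto R-\eta f_\alpha^k v$, and the same use of the uniformity of $\delta(\cdot)$ to get $|f_\alpha(x)|\le 1-\delta(1/n)$ off the orbit of $x_\alpha$ \emph{before} the exposed point is chosen. The only real divergence is the endgame: you obtain the limit orbit from completeness of the quotient metric $\inf_{|\lambda|=1}\|x-\lambda y\|$ on $B_X$ modulo unimodular scalars plus compactness of the scalar circle, while the paper extracts a convergent subsequence of the witness points $z_n$ through their exposing functionals and \cite[Lemma~6]{Aco91b}. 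Both routes work, and yours is arguably more self-contained; just make explicit that the limit orbit is the limit of the orbits of the witnesses $z_n$, hence independent of the particular maximizing sequence, so that a single point $z$ serves in the definition of strong norm attainment.

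However, the ``in particular'' clause is not fully covered by your citation. Ferrera's theorem \cite{Fer02}, as quoted in this paper, concerns \emph{real} Banach spaces, whereas the statement allows $\mathbb{F}=\C$, where strong attainment is only defined modulo unimodular scalars; so the equivalence cannot simply be invoked. This is precisely why the paper does not cite Ferrera in the proof: it instead uses the characterization from \cite{CLS07} --- the norm of $\mathcal{P}({}^kX)$ is Fr\'echet differentiable at $P$ if and only if for any sequences $\{t_n\},\{s_n\}$ in $B_X$ and unimodular $\alpha,\beta$ with $\lim_n\alpha P(t_n)=\lim_n\beta P(s_n)=\|P\|$ one has $\lim_n\sup_{\|Q\|=1}|\alpha Q(t_n)-\beta Q(s_n)|=0$ --- and verifies this condition for a strongly norm attaining $P$ via the Lipschitz estimate $\|Q(x)-Q(y)\|\le \frac{k^{k+1}}{k!}\|Q\|\,\|x-y\|$ obtained from the polarization identity, together with the observation that the limiting rotations satisfy $\alpha(\alpha'')^k=\beta(\beta'')^k$. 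You would need to supply this (or an equivalent complex-case argument) to justify the Fr\'echet differentiability claim when $X$ is complex; the $G_\delta$ part via \cite[Proposition~4.16]{BL00} is fine.
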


\begin{proof}
Let $\{x_\alpha\}$ be a u.s.e. points and $\{x_\alpha^*\}$ be the
corresponding functional which uniformly strongly exposes
$\{x_\alpha\}$. Let $A=\mathcal{P}({}^k X:Y)$, $f\in A$ and
$\epsilon>0$. Fix $n\ge 1$ and set
\[ U_n =\{ g\in A: \exists z\in \rho A \mbox{ with }
\|(f-g)(z)\| > \sup\{ \|(f-g)(x)\| : \inf_{|\lambda|=1}\|x-\lambda
z\|> 1/n\} \}.\] Then $U_n$ is open and dense in $A$. Indeed, fix
$h\in A$. Since $\{x_\alpha\}$ is a norming  subset of $A$, there is
a point $w\in \{x_\alpha\}$ such that
\[ \|(f-h)(w)\| > \|f-h\| -\delta(1/n)\epsilon.\]
Set \[g(x) = h(x) -\epsilon\cdot
p(x)^k\frac{f(w)-h(w)}{\|f(w)-h(w)\|},\] where $p$ is a strongly
exposed functional at $w$ such that $|p(x)|<1-\delta(1/n)$ for
$\inf_{|\lambda|=1}\|x-\lambda w\|>1/n$, $p(w)=1$. Then $\|g-h\|\le
\epsilon$ and
\begin{align*}
\|(f-g)(w)\|&=\left\|f(w)-h(w)+\epsilon p(w)^k\frac{f(w)-h(w)}{\|f(w)-h(w)\|}\right\| = \|f(w)-h(w)\|+ \epsilon\\
&> \|f-h\| + \epsilon(1-\delta(1/n))\\
&\ge \sup\{ \|(f-h)(x) - \epsilon p(x)^k\frac{f(w)-h(w)}{\|f(w)-h(w)\|}\|: \inf_{|\lambda|=1}\|x-\lambda w\|> 1/n\}.\\
&=\sup\{ \|(f-g)(x)|:\inf_{|\lambda|=1}\|x-\lambda w\|> 1/n\}.
\end{align*} That is, $g\in U_n$.

By the Baire category theorem there is a $g\in \bigcap U_n$ with
$\|g\|<\epsilon$, and we shall show that $f-g$ is a strong peak
function. Indeed, $g\in U_n$ implies that there is $z_n\in X$ such
that
\[ \|(f-g)(z_n)\|> \sup\{ \|(f-g)(x)\|: \inf_{|\lambda|=1}\|x-\lambda z_n\|> 1/n\}.\]
Thus $\inf_{|\lambda|=1}\|z_p -\lambda z_n\|\le 1/n$ for every
$p>n$, and $\inf_{|\lambda|=1}|z_p^*(z_n)-\lambda|=
1-|z_n^*(z_p)|\le 1/n$ for every $p>n$. So $\lim_n\inf_{p>n}
|z_n^*(z_p)|=1$. Hence there is a subsequence of $\{z_n\}$ which
converges to $z$, say by \cite[Lemma~6]{Aco91b}. Suppose that there is
another sequence $\{x_k\}$ in $B_X$ such that $\{\|(f-g)(x_k)\|\}$
converges to $\|f-g\|$. Then for each $n\ge 1$, there is $M_n$ such
that $M_n\ge n$ and  for every $m\ge M_n$,
\[ \|(f-g)(x_m)\|> \sup\{ \|(f-g)(x)\|:\inf_{|\lambda|=1} \|x-\lambda z_n\|> 1/n\}.\] Then
$\inf_{|\lambda|=1}\|x_m-\lambda z_n\|\le 1/n$ for every $m\ge M_n$.
So $\inf_{|\lambda|=1}\|x_m-\lambda z\|\le
\inf_{|\lambda|=1}\|x_m-\lambda z_n \| + \|z- z_n \|\le 2/n$ for
every $m\ge M_n$. Hence we get a convergent subsequence of $x_n$ of
which limit is $\lambda z$ for some $\lambda\in S_\mathbb{C}$. This
shows that $f-g$ strongly norm attains at $z$.

It is shown in \cite{CLS07} that the norm is Fr\'echet differentiable at
 $P$ if and only if whenever there are sequences $\{t_n\}$,
$\{s_n\}$ in $B_X$ and scalars $\alpha$, $\beta$ in $S_\mathbb{F}$
such that $\lim_n \alpha P(t_n)=\lim_n \beta P(s_n) = \|P\|$, we get
\begin{equation}\label{eq1}\lim_n\ \sup_{\|Q\|=1}|\alpha Q(t_n)-\beta Q(s_n)|=0.\end{equation}
We have only to show that every nonzero element $P$ in $A$ which
strongly attains its norm satisfies the condition~(\ref{eq1}).
Suppose that $P$ strongly attains its norm at $z$ and $P\neq 0$.

For each $Q\in A$, there is a $k$-linear form $L$ such that $Q(x) =
L(x,\dots,x)$ for each $x\in X$. The polarization identity \cite{Din99}
shows that $\|Q\|\le \|L\|\le (k^k/k!)\|Q\|$. Then for each $x,y\in
B_X$, $\|Q(x)- Q(y)\|\le n\|L\|\|x-y\|$ and
\[\|Q(x)- Q(y)\|\le \frac{k^{k+1}}{k!} \|Q\|\|x-y\|.\]

Suppose that there are sequences $\{t_n\}$, $\{s_n\}$ in $B_X$ and
scalars $\alpha$, $\beta$ in $S_\mathbb{F}$ such that $\lim_n \alpha
P(t_n)=\lim_n \beta P(s_n) = \|P\|$, then for any subsequences
$\{s_n'\}$ of $\{s_n\}$ and $\{t_n'\}$ of $\{t_n\}$, there are
convergent further subsequences $\{t''_n\}$ of $\{t_n'\}$ and
$\{s_n''\}$ of $\{s_n'\}$ and scalars $\alpha''$ and $\beta''$ in
$S_\mathbb{F}$ such that $\lim_n t''_n = \alpha'' z$ and $\lim_n
s_n'' = \beta'' z$. Then $\alpha P(\alpha''z) = \beta P(\beta''z) =
1$. So $\alpha(\alpha'')^k = \beta(\beta'')^k$.

Then we get
\begin{align*}\overline{\lim_n}\ \sup_{\|Q\|=1} |\alpha Q(t''_n) - \beta
Q(s''_n)| &\le \overline{\lim_n}\ \sup_{\|Q\|=1} \left[|\alpha
Q(t''_n) - \alpha
Q(\alpha''z)| + |\beta Q(\beta'' z) - \beta Q(s''_n)|\right]\\
&\le \overline{\lim_n}  \frac{k^{k+1}}{k!} (\|t''_n - \alpha''z\| +
\| \beta'' z-  s''_n\|)=0.
\end{align*}
This implies that $\lim_n \sup_{\|Q\|=1}|\alpha Q(t_n)- \beta
Q(s_n)|=0$. Therefore the norm is Fr\'echet differentiable at $P$.
This completes the proof.
\end{proof}

\begin{rem}
Suppose that the $B_X$ is the closed convex hull of a set of  u.s.e
points, then the set of u.s.e. points is a norming subset of
$X^*=\mathcal{P}({}^1 X)$. Hence the elements in $X^*$ at which the
norm of $X^*$ is Fr\'echet differentiable is a dense $G_\delta$
subset.
\end{rem}


\section{Polynomial numerical index and Graph theory}

A norm $\|\cdot\|$ on $\R^n$ or $\C^n$ is said to be an \emph{absolute norm} if $\|(a_1,\ldots,a_n)\| = \|(|a_1|,\ldots,|a_n|)\|$ for every scalar $a_1,\ldots,a_n$, and $\|(1,0,\ldots,0)\| = \cdots = \|(0,\ldots,0,1)\| = 1$.
We may use the fact that the absolute norm is less than or equal to the $\ell_1$-norm and it is nondecreasing in each variable.

A real or complex Banach space $X$ is said to be a \emph{CL-space} if its unit ball is the absolutely convex hull of every maximal convex subset of the unit sphere. In particular, if $X$ is finite dimensional, then it is equivalent to the condition that $|x^*(x)|=1$ for every $x^* \in \ext B_{X^*}$ and every $x \in \ext B_X$ \cite{Lim78}.

Let $X$ be a n-dimensional Banach space with an absolute norm $\|\cdot\|$. In this section, $X$ as a vector space is considered $\R^n$ or $\C^n$ and we denote by $\{e_j\}_{j=1}^n$ and $\{e_j^\ast\}_{j=1}^n$ the canonical basis and the coordinate functionals, respectively. We also denote by $\ext B_X$ the set of all extreme points of $B_X$.

Now define the following mapping between n-dimensional Banach spaces with an absolute norm and certain graphs with n vertices :
$$
X \quad \longmapsto \quad G=G(X),
$$
where $G$ is a graph with the vertex set $V=\{1,2,\ldots,n\}$ and the edge set $E=\{(i,j) : e_i+e_j\notin B_X\}$. For example, if $X=\ell_1^n$, then $G(X)$ is a complete graph of order $n$, that is, a graph in which every pair of $n$ vertices is connected by an edge. Conversely, $G(X)$ is a graph in which no pairs of $n$ vertices are connected by an edge if $X=\ell_\infty^n$. Using these graphs and their theory, Reisner gave the exact characterization of all finite dimensional CL-spaces with an absolute norm. Prior to Reisner's theorem, we give some basic definitions in the graph theory.

Given a graph $G=(V,E)$, we say that $\sigma \subset V$ is a \emph{clique} of $G$ if the edge set $E$ of $G$ contains all pairs consisting of any two vertices in $\sigma$. Conversely, $\tau \subset V$ is called a \emph{stable set} of $G$ if $E$ contains no pairs consisting of two vertices in $\tau$. A graph $G$ is said to be \emph{perfect} if $$\omega(H) = \chi(H) \text{ for every induced subgraph } H \text{ of } G,$$
where $\omega(G)$ denotes the clique number of $G$ (the largest cardinality of a clique of $G$) and $\chi(G)$ is the chromatic number of $G$ (the smallest number of colors  needed to color the vertices of $G$ so that no two adjacent vertices share the same color).

\begin{thm}\cite[Reisner]{Rei91}\label{graph}
Let $X$ be a finite dimensional Banach space with an absolute norm. Then $X$ is a CL-space if and only if $G=G(X)$ is a perfect graph and there exists a unique common element between every maximal clique and each maximal stable set of $G$.
In particular, for every n-dimensional CL-space $X$, the following characterizations of the set of extreme points of $B_X$ and $B_{X^\ast}$ hold respectively:
\begin{enumerate}
\item
$(x_1,x_2,\ldots,x_n)\in \ext B_X$ if and only if $|x_j| \in \{0,1\}$ for all $j=1,2,\ldots,n$ and the support $\{j\in V: x_j \neq 0\}$ is a maximal stable set of $G$.
\item
$(x_1,x_2,\ldots,x_n)\in \ext B_{X^\ast}$ if and only if $|x_j| \in \{0,1\}$ for all $j=1,2,\ldots,n$ and the support $\{j\in V: x_j \neq 0\}$ is a maximal clique of $G$.
\end{enumerate}
\end{thm}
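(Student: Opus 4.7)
The plan is to extract statement (1) directly from the CL hypothesis and the absolute norm, to observe that the dual $X^*$ is again an absolute-normed CL-space with $G(X^*)$ the complement of $G(X)$ so that (2) follows from (1) applied to $X^*$, and then to use (1)--(2) together with the CL identity to derive the graph-theoretic characterization. The two fundamental inputs are: (a) the coordinate torus acts isometrically on any absolute-normed space, so $B_X$ is invariant under independent rotations of coordinates and each extreme point is determined (up to a diagonal unitary) by the moduli of its coordinates; and (b) in finite dimensions, $X$ is CL iff $|x^*(x)|=1$ for every pair of extreme points $(x, x^*) \in \ext B_X \times \ext B_{X^*}$.

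For (1), fix $x \in \ext B_X$ and reduce to $x_j \geq 0$ via torus symmetry. Extremality in a one-dimensional slice (a disk, by torus invariance) forces $|x_j|$ to equal the slice capacity $r_j$. The CL identity pins $r_j \in \{0,1\}$: if $0 < r_j < 1$, one selects an extreme $x^* \in B_{X^*}$ achieving $|x^*(x)|=1$ and obtains a contradiction by perturbing $x_j$ up to $1$ while remaining in $B_X$. Once $|x_j|\in\{0,1\}$ is established, stability of $S = \{j : x_j \ne 0\}$ is immediate from absolute-norm monotonicity: an edge $(i,j) \subset S$ would force $\|x\| \geq \|e_i + e_j\| > 1$. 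For maximality of $S$, I invoke CL in the dual direction: every extreme $x^* \in B_{X^*}$ satisfies $|x^*(x)|=1$, and (1) applied to $X^*$ (a routine dual computation shows $G(X^*)$ is the complement of $G(X)$, so stable sets of $G(X^*)$ are cliques of $G(X)$) supplies extreme functionals supported on maximal cliques $K$, forcing $S \cap K \neq \emptyset$ for every such $K$. A direct graph-theoretic argument---if $S \cup \{k\}$ were stable for some $k \notin S$, picking a maximal clique $K \ni k$ would give $j \in S \cap K$ adjacent to $k$, contradicting stability of $S \cup \{k\}$---then shows $S$ is maximal stable. Statement (2) follows by (1) applied to $X^*$.

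Having (1)--(2), the CL identity reads $\left|\sum_{j \in S \cap K} \overline{x^*_j}\, x_j\right|=1$ for every maximal stable $S$ and maximal clique $K$. Each summand has modulus one, so the triangle inequality forces $|S \cap K|=1$, yielding the unique-intersection property. For perfectness, restrict to any induced subgraph $H$ of $G(X)$ via the coordinate subspace $X_H$, which inherits the absolute norm and the CL property; applying (1) and (2) to $X_H$ produces a covering of $V(H)$ by $\omega(H)$ stable sets (each indexed by a vertex of a fixed maximal clique of $H$), giving $\chi(H) \leq \omega(H)$ and hence equality. Conversely, if $G(X)$ is perfect with the unique-intersection property, Lovász's clique--stable-set polytope duality identifies $B_X$ with the absolutely convex hull of the explicit candidates $\sum_{j\in S}e_j$ over maximal stable $S$, and $B_{X^*}$ with the analogue for the complement graph; the unique intersection then yields $|x^*(x)|=1$ for every extreme pair, giving CL.

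The main obstacle is the perfectness step, particularly the converse: one needs Lovász's perfect graph theorem to guarantee that the unit ball of $X$ really coincides with the absolutely convex hull of the combinatorially indexed candidates. I would import that theorem as a black box and focus on checking that the absolute-norm structure together with the two graph conditions force $B_X$ to equal this combinatorial body. There is also a mild bootstrapping issue in (1): the maximality clause calls on (1) for $X^*$, which I would handle by treating (1) and (2) as a joint statement proved simultaneously on $X$ and $X^*$. The remaining steps constitute a careful but essentially formal dictionary between the geometry of $B_X$ and the coordinate combinatorics dictated by the absolute norm.
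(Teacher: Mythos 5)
First, a point of orientation: the paper does not prove this statement. It is imported from Reisner \cite{Rei91} (stated there for the real case), and the only argument the paper supplies is Remark~\ref{rem:graph} together with Proposition~\ref{propbasic}, which transfer the real-case theorem to complex absolute norms via the complexification (extreme points and the CL property are preserved under $X\mapsto\tilde X$). You are therefore attempting a from-scratch proof of Reisner's theorem itself, and as written the attempt has genuine gaps.

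The most serious is the claim that ``a routine dual computation shows $G(X^*)$ is the complement of $G(X)$.'' For a general absolute norm this is false: for $X=\ell_2^2$ both $G(X)$ and $G(X^*)$ consist of the single edge $(1,2)$, not complementary graphs. Only one implication is routine ($\|e_i+e_j\|=1$ forces $\|e_i^*+e_j^*\|_{X^*}\geq 2$); the reverse implication, that $\|e_i+e_j\|>1$ forces $\sup_{x\in B_X}(|x_i|+|x_j|)\leq 1$, requires the CL hypothesis in an essential way and is in substance part of item (2) itself (cliques of $G(X)$ support norm-one functionals of the form $\sum \pm e_j^*$). Since your maximality clause in (1) and all of (2) are obtained by running ``(1) applied to $X^*$'' through this complementation, the argument is circular, and the proposed fix of proving (1) and (2) ``simultaneously'' has no base case: dualizing twice simply returns you to $X$. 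Two further steps are not established. The $|x_j|\in\{0,1\}$ claim is argued by ``perturbing $x_j$ up to $1$ while remaining in $B_X$,'' but when $x_j$ equals the slice capacity $r_j<1$ you cannot increase it within $B_X$ --- that is the definition of $r_j$ --- and some genuinely global use of CL is unavoidable here, since non-CL absolute norms (again $\ell_2^2$) do have extreme points with fractional coordinates; the natural repairs use the structure of $\ext B_{X^*}$, i.e.\ statement (2), and are again circular. Likewise, the perfectness step assumes that a coordinate subspace $X_H$ of a CL-space with absolute norm is again a CL-space; this is true but is itself a lemma needing proof, and in Reisner's development it emerges from the Fulkerson--Lov\'asz antiblocking-polytope machinery that you defer to as a black box for the converse. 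In short, what remains after the hand-waved steps is essentially Reisner's actual proof; the economical route --- and the paper's route --- is to cite \cite{Rei91} for the real case and add only the complexification argument of Proposition~\ref{propbasic}.
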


In this theorem, the maximality of cliques and stable sets comes from the partial order of inclusion.

Let $X$ be a finite dimensional CL-space. If $\tau$ is a maximal clique of $G=G(X)$, then a subspace $\vspan \{e_j : j\in \tau\}$ of $X$ is isometrically isomorphic to $\ell_1^{|\tau|}$. Indeed, since the absolute norm is less than or equal to the $\ell_1$-norm, we have $\|\sum_{j\in \tau} a_j e_j\| \leq \sum_{j\in \tau} |a_j|$ for every scalar $a_j$. For the inverse inequality, let $x^*_\tau = \sum_{j\in \tau} \overline{\sign} (a_j) \cdot e_j^*$. Then $x^*_\tau$ is in $\ext B_{X^*}$ by Theorem~\ref{graph} and hence $x^*_\tau(\sum_{j\in \tau} a_j e_j) = \sum_{j\in \tau} |a_j|$, which completes the proof.

\begin{rem}\label{rem:graph}
Originally, Reisner just proved the above theorem for the real case. However, it can be extended to the general case (real or complex). For this, we need the following comments and proposition.
\end{rem}

There is a natural one-to-one correspondence between the absolute norm of $\R^n$ and the one of $\C^n$. Specifically, given real Banach space $X=(\R^n,\|\cdot\|)$ with an absolute norm, we can find the complexification $\tilde{X}=(\C^n,\|\cdot\|_{\C})$ of $X$ defined by $\|(z_1,\ldots,z_n)\|_{\C} := \|(|z_1|,\ldots,|z_n|)\|$ for each $(z_1,\ldots,z_n) \in \tilde{X}$. Then $\tilde{X}$ is clearly the complex Banach space with the absolute norm. Moreover we get the following basic proposition.

\begin{prop}\label{propbasic}
Let $X$ be a real Banach space with an absolute norm and $\tilde X$ the complexification of $X$. Then, for an element $x$ of $X$, $x\in \ext B_X$ if and only if $x\in \ext B_{\tilde X}$. In particular, $X$ is a CL-space if and only if $\tilde X$ is a CL-space.
\end{prop}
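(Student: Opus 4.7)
The plan is to prove the extreme-point equivalence first and then derive the CL-space assertion from it together with Reisner's theorem (Theorem~\ref{graph}) for the real case.

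For the extreme-point equivalence: the backward direction is immediate, since the restriction of $\|\cdot\|_{\tilde X}$ to $\R^n$ agrees with $\|\cdot\|_X$, so $B_X\subset B_{\tilde X}$, and any real decomposition $x=(y+z)/2$ with $y,z\in B_X$ lies inside $B_{\tilde X}$ and is forced by extremality there to be trivial. For the forward direction, let $x\in\ext B_X$ and $x=(\tilde y+\tilde z)/2$ with $\tilde y,\tilde z\in B_{\tilde X}$. Real-ness of $x$ forces $\tilde y_j=x_j+it_j$ and $\tilde z_j=x_j-it_j$ for some $t_j\in\R$, so $|\tilde y_j|=|\tilde z_j|=\sqrt{x_j^2+t_j^2}\ge|x_j|$. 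I will manufacture real vectors $a,b\in B_X$ with $x=(a+b)/2$ by setting $s_j=\sign(x_j)$ when $x_j\neq 0$ (and $s_j=1$ otherwise), $a_j=s_j|\tilde y_j|$, and $b_j=2x_j-a_j$. Then $|a_j|=|\tilde y_j|$, and a short case-split using $|\tilde y_j|\ge|x_j|$ shows $|b_j|\le|\tilde y_j|$ at every $j$. Since $\|\cdot\|_X$ is absolute and nondecreasing in each variable and $\|(|\tilde y_j|)_j\|_X=\|\tilde y\|_{\tilde X}\le 1$, both $a,b$ lie in $B_X$. Extremality of $x$ then forces $a=x$, so $|\tilde y_j|=|x_j|$ and therefore $t_j=0$ for every $j$; hence $\tilde y=\tilde z=x$.

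For the CL-space statement I use the finite-dimensional characterization $|x^*(x)|=1$ on every extreme pair $(x^*,x)\in\ext B_{X^*}\times\ext B_X$, together with the standard facts (proved by sign-swapping) that $X^*$ again carries an absolute norm and $(\tilde X)^*$ identifies isometrically with $\widetilde{X^*}$. If $\tilde X$ is a CL-space, the already proved extreme-point equivalence applied to both $X$ and $X^*$ embeds every such extreme pair into $\ext B_{(\tilde X)^*}\times\ext B_{\tilde X}$, and the CL condition descends. For the converse, I need the auxiliary observation that $\tilde x\in\ext B_{\tilde X}$ implies $|\tilde x|:=(|\tilde x_j|)_j\in\ext B_X$: writing $\tilde x_j=e^{i\theta_j}|\tilde x_j|$, the diagonal multiplier $(u_j)\mapsto(e^{i\theta_j}u_j)$ is an $\tilde X$-isometry by absoluteness, so it pulls any real decomposition $|\tilde x|=(u+v)/2$ back to a decomposition of $\tilde x$ in $B_{\tilde X}$, and extremality of $\tilde x$ forces $u=v=|\tilde x|$. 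The same argument gives $|\tilde f^*|\in\ext B_{X^*}$ for $\tilde f^*\in\ext B_{\tilde X^*}$. Now Theorem~\ref{graph} applied to the real CL-space $X$ forces $|\tilde x_j|,|\tilde f^*_j|\in\{0,1\}$, with supports being a maximal stable set and a maximal clique of $G(X)$; these meet in a unique vertex $j_0$, so $\tilde f^*(\tilde x)=\tilde f^*_{j_0}\tilde x_{j_0}$ has modulus $1$ and $\tilde X$ is a CL-space.

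The delicate step is the construction of $a$ and $b$ in the forward direction: indices where $x_j=0$ look problematic because $s_j$ was chosen arbitrarily, but this is harmless since the equality $a=x$ forces $a_j=0$ there, back-solving to $|\tilde y_j|=0$ and hence $t_j=0$ at those coordinates too.
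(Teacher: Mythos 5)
Your backward direction and your treatment of the ``in particular'' clause are fine (indeed, the CL-space derivation via $(\tilde X)^*\cong\widetilde{X^*}$, the descent/ascent of extreme pairs, and Reisner's theorem is more complete than the paper, which offers no proof of that clause). But there is a genuine gap at the hinge of the forward direction: the claim that ``real-ness of $x$ forces $\tilde y_j=x_j+it_j$ and $\tilde z_j=x_j-it_j$.'' Realness of $x$ only gives $\IM(\tilde y)=-\IM(\tilde z)$ and $\RE(\tilde y)+\RE(\tilde z)=2x$; it does not give $\RE(\tilde y)=\RE(\tilde z)=x$. For instance, realness alone does not preclude $\RE(\tilde y_j)=0$ and $\RE(\tilde z_j)=1$ at a coordinate where $x_j=1/2$, and then $|\tilde y_j|<|x_j|$. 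Your construction genuinely depends on the excluded possibility: the inequality $|\tilde y_j|\ge|x_j|$, on which the case-split bound $|b_j|\le|\tilde y_j|$ rests, fails there (one gets $b_j=2x_j-|\tilde y_j|=1>0=|\tilde y_j|$), so $b\in B_X$ is no longer guaranteed. The repair is one line and uses exactly the tools you deploy elsewhere: since $|\RE(w_j)|\le|w_j|$ and the absolute norm is nondecreasing in each modulus, $\RE(\tilde y)$ and $\RE(\tilde z)$ lie in $B_X$; they average to the extreme point $x$, hence $\RE(\tilde y)=\RE(\tilde z)=x$. This is precisely the opening step of the paper's proof, so the fix must be made explicit rather than attributed to realness.

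With that step inserted, your argument closes, by a mildly different route from the paper's: you package everything into a single global real decomposition $x=(a+b)/2$ with $a_j=s_j|\tilde y_j|$ and let extremality in $B_X$ force $|\tilde y_j|=|x_j|$ (hence $t_j=0$) at every coordinate at once, whereas the paper argues coordinatewise, perturbing by $\pm\delta e_j$ at any index with $\IM(\tilde y_j)\ne 0$ to contradict extremality directly. The two are of comparable length and both rest on the same monotonicity of the absolute norm; neither buys much over the other here.
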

\begin{proof}
The sufficiency is clear. Suppose that $x$ is an extreme point of $B_X$ and $2x = y + z$ for some $y, z$ in $B_{\tilde X}$. We claim that $y$ and $z$ are in $X$. For the contrary, suppose that some $j^{th}$-coordinate of $y$ is not a real number. That is, $e_j^*(y)= a+bi$, $a,b\in\R$ and $b\neq 0$. Since $2x = y + z = \RE(y) +\RE(z) + i(\IM(y)+ \IM(z))$, we have $\RE(y)=\RE(z)=x$, where $\RE(x) =\sum_{k=1}^n \RE(e_k^*(x)) e_k$ and $\IM(x) =\sum_{k=1}^n \IM(e_k^*(x)) e_k$. Take a positive real number $\delta$ less than $\sqrt{a^2+b^2} -|a|$. Note that $\|x\pm \delta e_j\| \leq \|y\| \leq 1$ by the basic property of an absolute norm. So $2x = (x+\delta e_j) + (x-\delta e_j)$ contradicts to the fact that $x$ is an extreme point of $B_X$.
\end{proof}

Using the graph-theoretic technique on CL-spaces, we are about to find the strongly norm attaining points of $\tilde{\rho}\mathcal{P}({}^m X)$. For this, let us consider the following lemma.
\begin{lem}\label{lem:str_att_poly}
Let $Y=\ell^N_1$ and let $m$ be a positive integer. For any $j_1,j_2,\ldots,j_m \in \{1,2,\ldots,N\}$, define a m-homogeneous polynomial $Q_{j_1,j_2,\ldots,j_m}$ of $Y$ by $$Q_{j_1,j_2,\ldots,j_m}(x) = \prod^{m}_{k=1}e^\ast_{j_k}(x) + \Big[\sum_{j\in \{j_1, \ldots, j_m\}} e^\ast_{j}(x)\Big]^m.$$ Then $Q_{j_1,j_2,\ldots,j_m}$ attains its norm only at $\frac{c}{m}\sum^{m}_{k=1}e_{j_k}$, $|c|=1$. Hence $Q_{j_1,j_2,\ldots,j_m}$ strongly attains its norm at $\frac{1}{m}\sum^{m}_{k=1}e_{j_k}$.
\end{lem}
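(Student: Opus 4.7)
The plan is to exploit the explicit unit ball $B_{\ell_1^N}$ together with two elementary inequalities: the triangle inequality in the plane and weighted AM--GM. Let $S := \{j_1,\ldots,j_m\}$ (as a set of distinct indices), write $c_j := |\{k : j_k = j\}|$ so that $\sum_{j \in S} c_j = m$ and $\prod_{k=1}^m e^*_{j_k}(x) = \prod_{j \in S} (e^*_j(x))^{c_j}$, and set $x_0 := \frac{1}{m}\sum_{k=1}^m e_{j_k} = \frac{1}{m}\sum_{j \in S} c_j e_j$. Then $\|x_0\|_1 = 1$, and a direct substitution yields $Q_{j_1,\ldots,j_m}(x_0) = m^{-m}\prod_{j \in S}c_j^{c_j} + 1$, which will serve as the lower bound for $\|Q_{j_1,\ldots,j_m}\|$.

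For the matching upper bound, I would fix $x \in B_Y$, put $t_j := |e^*_j(x)|$ and $u := \sum_{j \in S} t_j \le \|x\|_1 \le 1$, and apply two triangle inequalities to obtain $|Q_{j_1,\ldots,j_m}(x)| \le \prod_{j \in S} t_j^{c_j} + u^m$. For fixed $u$, weighted AM--GM maximizes $\prod_{j \in S} t_j^{c_j}$ at $t_j = c_j u/m$, yielding $(u/m)^m\prod_{j\in S} c_j^{c_j}$. Inserting this into the upper bound and using $u \le 1$ pins down
$$\|Q_{j_1,\ldots,j_m}\| = 1 + m^{-m}\prod_{j \in S} c_j^{c_j}.$$

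The main task, on which the lemma really turns, is tracing back the equality cases in the two chains above to identify the full norm-attaining set. Equality in the product bound forces $u = 1$ (so every coordinate outside $S$ vanishes) and the AM--GM equality $t_j = c_j/m$; the inner triangle equality $|\sum_{j \in S} e^*_j(x)| = \sum_{j \in S} t_j$, combined with $t_j > 0$ for every $j \in S$, demands a common unimodular phase $c$ for all $e^*_j(x)$, $j \in S$; and the outer triangle equality is then automatic, since both summands of $Q_{j_1,\ldots,j_m}(x)$ share the argument $c^m$. Unwinding gives $x = c \cdot x_0$ with $|c| = 1$, proving the first assertion. Strong norm attainment at $x_0$ then comes for free from the finite-dimensionality of $Y$: any sequence $\{x_n\}$ in $B_Y$ with $|Q_{j_1,\ldots,j_m}(x_n)| \to \|Q_{j_1,\ldots,j_m}\|$ has a convergent subsequence whose limit $x^*$ satisfies $|Q_{j_1,\ldots,j_m}(x^*)| = \|Q_{j_1,\ldots,j_m}\|$, and the equality analysis forces $x^* = \lambda x_0$ with $|\lambda| = 1$.
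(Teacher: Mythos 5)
Your proof is correct and follows essentially the same route as the paper's: both reduce to locating the unique maximum of the monomial $\prod_j t_j^{c_j}$ on the $\ell_1$-simplex (you via weighted AM--GM, the paper via an inductive calculus argument) and to the observation that the second summand forces all coordinates in the support to share a common phase. Your equality-case analysis --- in particular the check that the outer triangle inequality is automatically an equality because both summands carry the argument $c^m$ --- is spelled out more carefully than in the paper, which leaves that step implicit.
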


\begin{proof} For positive integers $m_1, \ldots, m_n$, consider the product $$(x_1,\ldots,x_n) \longmapsto \prod_{k=1}^n x_k^{m_k}$$  on the compact subset $\R_+^n\cap S_{\ell_1^n}$. Then it is easy to see by induction that the product has the unique maximum at $(\frac{m_1}{m_1 + \cdots + m_n}, \ldots, \frac{m_n}{m_1 + \cdots + m_n})$. Hence the norm of the polynomial $\prod^{m}_{k=1}e^\ast_{j_k}(x)$ is attained only at $x=(x_1, \ldots, x_N)$, where $|x_j| = \frac 1m \sum_{k=1}^m e^*_j (e_{j_k})$ for each $1\le j \le N$. Notice also that the norm of the polynomial $(\sum^{N}_{j=1}e^\ast_{n}(x))^m$ is attained only at $x=(x_1, \ldots, x_N)$, where $\sign(x_1)= \dots = \sign(x_N)$ and $x\in S_{\ell_1^N}$. Hence $Q_{j_1,j_2,\ldots,j_m}$ attains its norm only at $\frac cm \sum_{k=1}^m e_{j_k}$ for some $c\in S_\C$. This completes the proof.
\end{proof}

\begin{thm}\label{poly_peak_pt}
Let $X$ be a finite dimensional (real or complex) CL-space with an absolute norm and let $m$ be a positive integer. Then $\frac{1}{m}\sum^{m}_{j=1}y_j \in \tilde{\rho}\mathcal{P}({}^m X)$ whenever $y_1,y_2,\ldots,y_m$ are extreme points of $B_X$ whose coordinates are nonnegative real numbers.
\end{thm}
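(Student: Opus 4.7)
The plan is to apply Lemma~\ref{lem:str_att_poly} locally on each maximal clique of $G=G(X)$ and glue the resulting polynomials. By Theorem~\ref{graph}, each $y_j=\sum_{k\in\tau_j}e_k$ for a maximal stable set $\tau_j$ of $G$, so $z_k=|\{j:k\in\tau_j\}|/m$. Fix a maximal clique $\sigma$ of $G$: by Reisner's uniqueness clause, $\sigma\cap\tau_j=\{k^\sigma_j\}$ is a single vertex, and the remark immediately after Theorem~\ref{graph} identifies $V_\sigma:=\vspan\{e_k:k\in\sigma\}$ isometrically with $\ell_1^{|\sigma|}$. A direct count gives
\[
R_\sigma(z):=\sum_{k\in\sigma}z_k e_k=\frac{1}{m}\sum_{j=1}^m e_{k^\sigma_j},
\]
which is precisely the point where the polynomial produced by Lemma~\ref{lem:str_att_poly} strongly attains its norm on $V_\sigma$.

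Let $Q_\sigma$ be that polynomial on $V_\sigma$, and lift it to $X$ by $\tilde Q_\sigma(x):=Q_\sigma(R_\sigma(x))$; the fact that the absolute norm is non-decreasing in each variable makes $R_\sigma$ a norm-$1$ contraction from $B_X$ into $B_{V_\sigma}$ (with $\|R_\sigma(x)\|_{V_\sigma}=\sum_{k\in\sigma}|x_k|\le\|x\|$), so $|\tilde Q_\sigma|\le\|Q_\sigma\|$ on $B_X$ with equality at $z$. Then set
\[
P(x):=\sum_\sigma \tilde Q_\sigma(x)+\Bigl(\frac{1}{m}\sum_{j=1}^m y^*_j(x)\Bigr)^m,
\]
where $\sigma$ ranges over all maximal cliques of $G$ and $y^*_j:=\sum_{k\in\sigma_j}e^*_k\in\ext B_{X^*}$ for maximal cliques $\sigma_j$ chosen so as to meet every component of the ``clique graph on $\supp(z)$'' (two maximal cliques adjacent iff they share a vertex in $\supp(z)$). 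By Theorem~\ref{graph} and Reisner, $y^*_j(y_l)=|\sigma_j\cap\tau_l|=1$ for every $l$, so the linking term equals $1$ at $z$ and has norm at most $1$ on $B_X$; the triangle inequality then yields $\|P\|=P(z)$.

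For strong attainment, extract a convergent subsequence $x_n\to x_\infty\in B_X$ with $|P(x_n)|\to\|P\|$. Equality cases of the triangle inequality force each $|\tilde Q_\sigma(x_\infty)|=\|Q_\sigma\|$, and then Lemma~\ref{lem:str_att_poly} gives $(x_\infty)_k=c_\sigma z_k$ for every $k\in\sigma$ with some unimodular $c_\sigma$; two maximal cliques sharing a vertex $k$ with $z_k>0$ force $c_\sigma=c_{\sigma'}$, and $(x_\infty)_k=0$ automatically for $k\notin\supp(z)$. The delicate remaining step--and the part I expect to be the main obstacle--is removing the $m$-th root of unity ambiguity among the $c_\sigma$'s belonging to different components of the clique graph on $\supp(z)$; this is exactly what the linking term is designed to handle, since equality there forces $|\frac{1}{m}\sum_j y^*_j(x_\infty)|=1$ with each $y^*_j(x_\infty)=\sum_{k\in\sigma_j}(x_\infty)_k=c_{\sigma_j}$, so the $c_{\sigma_j}$'s must coincide and, by the choice of the $\sigma_j$'s, this propagates to $c_\sigma=c$ globally, whence $x_\infty=cz$ as required. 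In the event that the number of components exceeds $m$ (so that $m$ cliques cannot hit every component), one should instead use a global linking functional built from $\supp(z)$ itself, verifying that the absolute-norm and Reisner-perfectness hypotheses still give $\|L\|=L(z)$; it is at this point that a careful, graph-theoretic analysis of the CL-space structure is needed.
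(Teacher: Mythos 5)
Your construction is essentially the one in the paper: clique-local polynomials obtained from Lemma~\ref{lem:str_att_poly}, composed with the projections onto the $\ell_1$-spans of maximal cliques, plus an $m$-th power of a linear ``linking'' term to synchronize the unimodular phases. The gap is exactly where you flagged it, and it is a genuine one: you tied the \emph{number} of linking functionals to the homogeneity degree $m$, and with only $m$ cliques $\sigma_1,\dots,\sigma_m$ you cannot in general reach every component of the clique graph on $\supp(z)$. This already fails in the simplest example $X=\ell_\infty^n$ with $n>m$ and $m$ even (or $X$ complex): there the maximal cliques are the singletons $\{k\}$, each $y_j=(1,\dots,1)$, so $z=(1,\dots,1)$ and the clique graph on $\supp(z)$ has $n$ isolated components. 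Your polynomial $\sum_{k}\tilde Q_{\{k\}}(x)+\bigl(\tfrac1m\sum_{j=1}^m x_{k_j}\bigr)^m=\sum_k 2x_k^m+\bigl(\tfrac1m\sum_{j=1}^m x_{k_j}\bigr)^m$ then also attains its norm at vectors whose coordinates outside $\{k_1,\dots,k_m\}$ are arbitrary $m$-th roots of a common phase (e.g.\ $(1,1,-1)$ for $m=2$, $n=3$, $\{k_1,k_2\}=\{1,2\}$), so strong attainment at $cz$ fails.

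The fix --- and what the paper actually does --- is to decouple the two roles of $m$: the exponent $m$ is needed only to make the linking term $m$-homogeneous, while the number of summands inside the bracket may be as large as you like. For \emph{every} maximal clique $J$ put $L_J=\sum_{k\in J\cap\supp(z)}e_k^*$ (with the multiplicity-free support $\{j_1,\dots,j_m\}=\{J\cap\supp(y_1),\dots,J\cap\supp(y_m)\}$) and use $\bigl[\sum_{J}L_J\bigr]^m$ as the linking term. Each $L_J$ satisfies $\|L_J\|\le 1$ because $|L_J(x)|\le\sum_{k\in J}|x_k|=\|P_Jx\|\le\|x\|$ on the $\ell_1$-span of the clique $J$, and $L_J(z)=\tfrac1m\sum_{l=1}^m 1=1$; hence $\bigl\|\sum_J L_J\bigr\|=\sum_J\|L_J\|$, and the equality case of the triangle inequality at a norming point $x_\infty$ forces all the values $L_J(x_\infty)=c_J$ to share a single phase $c$ simultaneously. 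No component analysis and no separate ``global linking functional built from $\supp(z)$'' is needed. With that replacement, the rest of your argument (equality cases, Lemma~\ref{lem:str_att_poly} applied clique by clique, and recovery of $x_\infty=cz$ coordinatewise because every vertex lies in some maximal clique) goes through and coincides with the paper's proof.
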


\begin{proof}
Denote by $M(G)$ the family of all maximal cliques of $G=G(X)$ and let $y_1,y_2,\ldots,y_m$ be extreme points of $B_X$ whose coordinates are nonnegative real numbers.
For each $J\in M(G)$, define the $m$-homogeneous polynomial $Q_J$ and linear functional $L_J$ as the following
$$
Q_J = Q_{j_1,j_2,\ldots,j_m}, \quad   L_J = \sum_{j\in \{j_1, \ldots, j_m\}} e^*_j,
$$
where each $j_k$ ($k=1,2,\ldots,m$) is a unique common element between a maximal clique $J$ and the support of an extreme point $y_k$.
Now define a $m$-homogeneous polynomial $Q$ of $X$ by
$$
Q = \sum_{J\in M(G)} Q_J + \Big[\sum_{J\in M(G)} L_J\Big]^m.
$$

For every maximal clique $J$ of $G$, denote by $P_J$ the projection of $X$ onto $\vspan \{e_j : j\in J \}$. Then, it is clear that
$$
P_J \Big(\frac{1}{m}\sum^{m}_{j=1}y_j\Big) = \frac{1}{m}\sum^{m}_{j=1}P_J (y_j) = \frac{1}{m}\sum^{m}_{k=1}e_{j_k}.
$$ Notice that $Q_J \circ P_J = Q_J$ and $L_J \circ P_J = L_J$ for each $J\in M(G)$.
It follows by Theorem~\ref{complex_ext} that
\begin{align*}
&Q\Big(\frac{1}{m}\sum^{m}_{j=1}y_j\Big) \\
&= \sum_{J\in M(G)} Q_J\Big(\frac{1}{m}\sum^{m}_{j=1}y_j\Big) + \Big[\sum_{J\in M(G)} L_J\Big(\frac{1}{m}\sum^{m}_{j=1}y_j\Big)\Big]^m \\
&= \sum_{J\in M(G)} Q_J\Big(P_J\big(\frac{1}{m}\sum^{m}_{j=1}y_j\big)\Big) + \Big[\sum_{J\in M(G)} L_J\Big(P_J\big(\frac{1}{m}\sum^{m}_{j=1}y_j\big)\Big)\Big]^m\\
&= \sum_{J\in M(G)} Q_J\Big(\frac{1}{m}\sum^{m}_{k=1}e_{j_k}\Big) + \Big[\sum_{J\in M(G)} L_J\Big(\frac{1}{m}\sum^{m}_{k=1}e_{j_k}\Big)\Big]^m \\
&= \sum_{J\in M(G)} \|Q_J\| + \Big[\sum_{J\in M(G)} \|L_J\|\Big]^m\\
&= \quad \big\|Q\big\|,
\end{align*}
and that $Q$ attains its norm at $\frac{1}{m}\sum^{m}_{j=1}y_j$.

Then we claim that the above polynomial $Q$ strongly attains its norm at $\frac{1}{m}\sum^{m}_{j=1}y_j$. Now, suppose that $|Q(y)|  = \|Q\|$ and we need to show that $y=\frac cm \sum_{j=1}^m y_j$ for some $c\in S_\C$. Then we have the following inequalities.
\begin{align*}
|Q(y)| &= \Big|\sum_{J\in M(G)} Q_J(P_J y)\Big| + \Big|\sum_{J\in M(G)} L_J(P_J y) \Big|^m \\
&\leq \sum_{J\in M(G)} |Q_J(P_J y)| + \Big[\sum_{J\in M(G)} | L_J (P_J y ) |\Big]^m\\
&\leq \sum_{J\in M(G)} \|Q_J\|  + \Big[\sum_{J\in M(G)} \| L_J \|\Big]^m\\
&= \quad \big\|Q\big\|.
\end{align*}
Hence
$ |Q_J(P_J y)| = \|Q_J\|$, $L_J(P_J y) = \|L_J\|$ for each $J\in M(G)$ and $\sign L_J(P_J y)$ are all the same for all $J\in M(G)$. By  Theorem~\ref{complex_ext}, $P_J y =\frac{c_J}{m}\sum^{m}_{k=1}e_{j_k}$  for each $J\in M(G)$. Since $\sign(L_J(P_J y)) = c_J$ and they are all the same for all $J\in M(G)$,  take $c_J = c$ for all $ J\in M(G)$.

Since each maximal clique $J$ induces every extreme point $x_J^\ast$ of $B_{X^\ast}$, the above equation implies that
$$
x_J^*(y) = x_J^\ast(P_J y)= x_J^*P_J\Big(\frac{c}{m}\sum^{m}_{j=1}y_j\Big) = x_J^*\Big(\frac{c}{m}\sum^{m}_{j=1}y_j\Big)
$$
Consequently $y= \frac{c}{m}\sum^{m}_{j=1}y_j$. This completes the proof.
\end{proof}

It was shown in \cite{Lee07b} that a necessary and sufficient condition for a Banach space with the Radon-Nikodym Property to have the polynomial numerical index one can be stated as follows:
$\quad n^{(k)}(X)=1$ if and only if
\begin{equation}\label{lee_polyindex}
|x^{\ast}(x)|=1 \textrm{ for all }x \in \tilde{\rho}\mathcal{P}({}^k X) \textrm{ and } x^\ast \in \ext B_{X^\ast}.
\end{equation}

The following theorem is a partial answer to Problem 43 in \cite{KMP06} :
\begin{quote}
\emph{Characterize the complex Banach spaces $X$ satisfying $n^{(k)}(X)=1$ for all $k \geq 2$.}
\end{quote}

\begin{thm}\label{polyindex}
Let $X$ be a n-dimensional complex Banach space with an absolute norm and let $k$ be an integer greater than or equal to 2. Then $n^{(k)}(X)=1$ if and only if $X$ is isometric to $\ell_\infty^n$.
\end{thm}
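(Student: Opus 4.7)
The forward direction $X \cong \ell_\infty^n \Rightarrow n^{(k)}(X) = 1$ is known (see, e.g., \cite{KMP06}), so the plan focuses on the converse and argues by contraposition. I would proceed in two stages: first reduce to the case that $X$ is a CL-space, and then within CL-spaces use Reisner's graph characterization (Theorem~\ref{graph}) together with Theorem~\ref{poly_peak_pt} and the criterion~\eqref{lee_polyindex} to force $G(X)$ to have no edges, which gives $X \cong \ell_\infty^n$.

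For the reduction to CL, I would first show $n^{(k)}(X) = 1 \Rightarrow n(X) = 1$. Given $T \in \mathcal{L}(X,X)$ with $\|T\| = 1$, pick $y_0 \in S_X$ achieving $\|Ty_0\| = 1$ and $f \in S_{X^*}$ with $f(y_0) = 1$; then the polynomial $P(y) := f(y)^{k-1} T(y) \in \mathcal{P}(^k X : X)$ satisfies $\|P\| = 1$ (attained at $y_0$) and $v(P) \le v(T)$ by a direct sup estimate, forcing $v(T) \ge n^{(k)}(X) = 1$. Next I would invoke the known characterization that, for a complex finite-dimensional Banach space with an absolute norm, $n(X) = 1$ implies $X$ is a CL-space. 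The main obstacle of the plan lies precisely here: if the reference is not directly applicable in the form needed, one would have to give a direct rank-one argument working in the absolute-norm coordinates.

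Now assume $X$ is a CL-space. If $G(X)$ had no edge, then every maximal clique would be a singleton and the unique maximal stable set would be all of $V$, so Theorem~\ref{graph}(1) together with Krein--Milman would give $B_X = B_{\ell_\infty^n}$, contradicting $X \not\cong \ell_\infty^n$. Hence $G(X)$ has at least one edge and admits two distinct maximal stable sets $\sigma_1 \ne \sigma_2$, with $\sigma_1 \setminus \sigma_2 \ne \emptyset$ by maximality of both. Setting $y_i := \sum_{j \in \sigma_i} e_j$, which are extreme points of $B_X$ with nonnegative real coordinates by Theorem~\ref{graph}(1), an application of Theorem~\ref{poly_peak_pt} (with $m = k$ and $y_1, y_2, y_2, \dots, y_2$) places the point $z := \tfrac{1}{k}(y_1 + (k-1)\,y_2)$ in $\tilde{\rho}\mathcal{P}(^k X)$.

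To conclude, I would pick $j_0 \in \sigma_1 \setminus \sigma_2$ and a maximal clique $\tau$ of $G(X)$ containing $j_0$. The unique-intersection property of Reisner's theorem forces $\tau \cap \sigma_1 = \{j_0\}$ and $\tau \cap \sigma_2 = \{j_1\}$ for some $j_1 \ne j_0$, so that $z_{j_0} = 1/k$, $z_{j_1} = (k-1)/k$, and $z_j = 0$ for $j \in \tau \setminus \{j_0, j_1\}$. The functional
$$
x^* := e_{j_0}^* - e_{j_1}^* + \sum_{j \in \tau \setminus \{j_0, j_1\}} e_j^*
$$
lies in $\ext B_{X^*}$ by Theorem~\ref{graph}(2), yet $|x^*(z)| = |1/k - (k-1)/k| = (k-2)/k < 1$ for every $k \ge 2$, contradicting the criterion~\eqref{lee_polyindex} for $n^{(k)}(X) = 1$. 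This contradiction completes the proof.
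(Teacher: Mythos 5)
Your proof is correct and follows essentially the same route as the paper: reduce to the CL-space case via $n^{(k)}(X)=1\Rightarrow n(X)=1$, then combine Reisner's theorem, Theorem~\ref{poly_peak_pt} and the criterion~\eqref{lee_polyindex} to rule out any edge in $G(X)$. Your version is in fact slightly more careful in two places: you apply Theorem~\ref{poly_peak_pt} with $m=k$ directly (the paper uses $\frac{x+y}{2}\in\tilde{\rho}\mathcal{P}({}^2X)$, implicitly relying on $n^{(k)}(X)\le n^{(2)}(X)$), and you choose the maximal clique $\tau$ through a point of $\sigma_1\setminus\sigma_2$, which guarantees that the two intersection points satisfy $j_0\ne j_1$ --- a point the paper's proof leaves implicit.
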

\begin{proof}
Suppose that $n^{(k)}(X)=1$. Since $n^{(k)}(X)=1$ implies $n(X)=1$ (i.e. $X$ is a CL-space), we can apply Theorem \ref{graph}. To show that $X$ is isometric to $\ell_\infty^n$, it suffices to prove that $B_X$ has only one extreme point whose coordinates are nonnegative real number. Suppose that there exist distinct two extreme points $x$, $y$ of $B_X$ whose coordinates are all nonnegative. Take a maximal clique $\tau$ of $G=G(X)$. Then, by Theorem \ref{graph}, there exists a unique common element $i$ between a maximal clique $\tau$ and the support of $x$. Similarly, let $j$ be a common element between $\tau$ and the support of $y$. Now consider an $x^\ast_\tau \in \ext B_{X^\ast}$ defined by
$$
x^\ast_\tau (e_k) =
\begin{cases}
-1 & \text{, if } k=i \\
1 & \text{, if } k\in \tau \backslash \{i\} \\
0 & \text{, if } k\notin \tau .
\end{cases}
$$
Then, using Theorem \ref{graph}, we get
$$ x^\ast_\tau (\frac{x+y}{2})=\frac{x^\ast_\tau (x)+x^\ast_\tau (y)}{2}=\frac{-1+1}{2}=0. $$
However, since $\frac{x+y}{2} \in \tilde{\rho}\mathcal{P}({}^2 X)$ by Theorem \ref{poly_peak_pt}, this contradicts to (\ref{lee_polyindex}).

For the converse, note that $\tilde{\rho}\mathcal{P}({}^k X) \subset \ext_{\C} B_X$ by \cite[Proposition 2.1]{Lee07b}. It is also easy to see that $\ext_{\C} B_X = \ext B_X$ when $X=\ell_\infty^n$. After all, it follows from (\ref{lee_polyindex}) that $n^{(k)}(\ell^n_\infty)=1$.
\end{proof}

\section{Characterization of complex extreme points}

\begin{thm}\label{complex_ext}
Let $X = (\C^n,\|\cdot\|)$ be a n-dimensional complex CL-space with an absolute norm. Then an element $(a_1,a_2,\ldots,a_n)$ in $X$ is an complex extreme point of $B_X$ if and only if $(|a_1|,|a_2|,\ldots,|a_n|)$ is a convex combination of extreme points of $B_X$ whose coordinates all are positive real numbers. In short, $\R_{+}^n \cap \ext_{\C} B_X = \co (\R_{+}^n \cap \ext B_X)$.
\end{thm}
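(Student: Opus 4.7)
The plan is to reduce the theorem to its equivalent ``in short'' form and then prove each inclusion separately. Since the norm on $X$ is absolute, the diagonal unitary sending $(a_1,\dots,a_n)$ to $(|a_1|,\dots,|a_n|)$ is an isometry of $X$ that preserves the property of being a complex extreme point, so it suffices to prove $\R_+^n\cap\ext_\C B_X=\co(\R_+^n\cap\ext B_X)$. Throughout I use the following consequences of Theorem~\ref{graph}: for $b\in\R_+^n$, $\|b\|=\max_{J}\sum_{j\in J}b_j$ as $J$ ranges over the maximal cliques of $G=G(X)$, and the extreme points of $B_X$ lying in $\R_+^n$ are precisely the indicator vectors $\chi_S$ of the maximal stable sets $S$ of $G$.

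For $(\supseteq)$, let $b=\sum_k t_k\chi_{S_k}$ be a convex combination with each $S_k$ maximal stable. The CL property in Theorem~\ref{graph} says $|J\cap S_k|=1$ for every maximal clique $J$, so $\sum_{j\in J}b_j=\sum_k t_k=1$. Suppose $\sup_\theta\|b+e^{i\theta}y\|\leq 1$. Then for every maximal clique $J$ and every $\theta\in[0,2\pi]$,
\[\sum_{j\in J}|b_j+e^{i\theta}y_j|\leq 1=\sum_{j\in J}b_j.\]
Averaging over $\theta$ and using that $z\mapsto|b_j+zy_j|$ is the modulus of an entire function, hence subharmonic, so $\frac{1}{2\pi}\int_0^{2\pi}|b_j+e^{i\theta}y_j|\,d\theta\geq b_j$ with equality forcing $y_j=0$ (the Laplacian of $|f|$ for entire $f$ equals $|f'|^2/|f|$ off the zero set, so harmonicity of $|b_j+zy_j|$ forces $y_j=0$), I obtain $y_j=0$ for every $j$ in every maximal clique, hence $y=0$.

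For $(\subseteq)$, let $b\in\R_+^n\cap\ext_\C B_X$. I first claim every vertex $j$ lies in some \emph{tight} maximal clique $J$, meaning $\sum_{j'\in J}b_{j'}=1$. Otherwise some $j_0$ has $c:=1-\max_{J\ni j_0}\sum_{j'\in J}b_{j'}>0$, and $y:=c\,e_{j_0}\neq 0$ satisfies $\sup_\theta\|b+e^{i\theta}y\|\leq 1$ by a direct check on each maximal clique (those not containing $j_0$ are unaffected, those containing $j_0$ gain at most $c$), contradicting complex extremeness. Next, since $G$ is a perfect graph by Reisner, the Lov\'asz--Chv\'atal perfect-graph polytope theorem yields
\[\bigl\{b\geq 0:\|b\|\leq 1\bigr\}=\co\{\chi_S:S\text{ is stable in }G\},\]
so write $b=\sum_S t_S\chi_S$ with $t_S\geq 0$ and $\sum_S t_S=1$ over (not necessarily maximal) stable sets $S$. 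Applied to any tight $J$, the identity $1=\sum_{j\in J}b_j=\sum_S t_S|J\cap S|$ combined with $|J\cap S|\leq 1$ and $\sum_S t_S=1$ forces $t_S=0$ whenever $S\cap J=\emptyset$. If some $S$ with $t_S>0$ were not maximal, pick $j_0\notin S$ with $S\cup\{j_0\}$ stable, let $J_0$ be a tight maximal clique through $j_0$ (existing by the claim), and select $j_1\in S\cap J_0$ (nonempty by the previous observation). Then $j_0,j_1\in J_0$ are adjacent (clique) yet non-adjacent ($S\cup\{j_0\}$ stable), a contradiction. Hence every $S$ supporting mass is maximal stable and $b\in\co(\R_+^n\cap\ext B_X)$.

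The principal subtlety is the appeal to the perfect-graph polytope theorem in the last paragraph, which is what lets one express a general $b\in B_X\cap\R_+^n$ as a convex combination of indicators of stable sets; everything else is a combination of Reisner's theorem, the CL intersection property, and the elementary subharmonic mean-value inequality.
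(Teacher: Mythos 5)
Your proof is correct, but it takes a genuinely different route from the paper's in both directions. For $\co(\R_+^n\cap\ext B_X)\subseteq\ext_\C B_X$, the paper chooses a single maximal clique $\tau$ meeting the support of the perturbation $y$, uses the dual extreme functional $x^*_\tau$ to see that $P_\tau x$ lies on the unit sphere of $\vspan\{e_j:j\in\tau\}\cong\ell_1^{|\tau|}$, and then quotes Globevnik's theorem that every norm-one point of complex $\ell_1$ is a complex extreme point; you instead note that every maximal-clique sum of $b$ equals $1$ (by the unique clique/stable-set intersection property) and re-prove the $\ell_1$ fact clique-by-clique via the circle average $\frac{1}{2\pi}\int_0^{2\pi}|b_j+e^{i\theta}y_j|\,d\theta\ge b_j$ with equality only when $y_j=0$. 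Your justification of that equality case through harmonicity of $|b_j+zy_j|$ is shakier than needed (equality of the mean at a single radius does not give harmonicity), but the strict inequality for $y_j\neq 0$ is elementary, so this is only a matter of wording. For the reverse inclusion the divergence is larger: the paper decomposes $(|a_1|,\ldots,|a_n|)$ into possibly signed real extreme points (which are extreme in $B_X$ by Proposition~\ref{propbasic}) and eliminates negative signs by the monotonicity perturbation $y_j\mapsto y_j+2e_1$, whereas you first produce a tight maximal clique through every vertex using complex extremeness, then invoke the Fulkerson--Lov\'asz--Chv\'atal theorem that for a perfect graph the stable-set polytope is cut out by nonnegativity and clique inequalities to write $b$ as a convex combination of stable-set indicators, and finally force maximality of the supporting stable sets combinatorially. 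Your route buys a clean, sign-free decomposition at the price of a nontrivial external polyhedral theorem (admittedly in the same circle of ideas as Reisner's theorem, which the paper already uses), while the paper's route is more self-contained, needing only Carath\'eodory/Minkowski plus monotonicity of the absolute norm. One point you should make explicit: the formula $\|b\|=\max_{J}\sum_{j\in J}b_j$ for $b\in\R_+^n$ and the inequality $\sum_{j\in J}|c_j|\le\|c\|$ for complex $c$ rest on the complex extension of Reisner's dual description of $\ext B_{X^*}$ (for the first, it is simpler to note that the complexified norm restricted to $\R^n$ is the real norm, so the real Reisner theorem applies); the paper asserts and uses exactly this extension via Remark~\ref{rem:graph} and the $\ell_1^{|\tau|}$ isometry on clique spans, so you are within its toolkit, but it deserves a sentence.
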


\begin{proof}
Let $(a_1,a_2,\ldots,a_n)$ be an complex extreme point of $B_X$. If $\tilde{X}=(\R^n,\|\cdot\|)$, then all extreme points of $B_{\tilde{X}}$ are extreme points of $B_X$ by Proposition \ref{propbasic}. So an element $x:=(|a_1|,|a_2|,\ldots,|a_n|)$ has an expression $x=\sum^{m}_{j=1}\lambda_j y_j$ where $\sum^{m}_{j=1}\lambda_j =1$, $\lambda_j >0$ and each $y_j$ is an extreme point of $B_X$ whose coordinates are real numbers. Now we claim that all coordinates of each $y_j$ are positive. For the contradiction, suppose that the first coordinate of some $y_j$ is negative. More specifically, assume that the first coordinate of $y_j$ is $-1$ if $1\leq j\leq r$ and 1 otherwise. Then $|a_1|<1$. Note that for $y'_j=y_j + 2e_1$, $j=1,2,\ldots,r$,
\begin{eqnarray*}
(1,|a_2|,\ldots,|a_n|) &=& \sum^{r}_{j=1}\lambda_j y'_j + \sum^{m}_{j=r+1}\lambda_j y_j\\
&\in& \co (\ext B_X) = B_X .
\end{eqnarray*}
It follows that for all $\eps \in \C$ with $|\eps|< 1-|a_1|$,
\begin{eqnarray*}
\|x + \eps e_1\| &=& \big\|(|a_1|+\eps,|a_2|,\ldots,|a_n|)\big\| \\
&\leq& \big\|(1,|a_2|,\ldots,|a_n|)\big\| \quad \leq  \quad 1.
\end{eqnarray*}
So $(|a_1|,|a_2|,\ldots,|a_n|)$ is not a complex extreme point, which is a contradiction.

For the converse, let $x=\sum^{m}_{j=1}\lambda_j y_j$ where $\sum^{m}_{j=1}\lambda_j =1$ and $y_j \in \ext B_X$ for all $j=1,2,\ldots,m$. Suppose that $x$ is not a complex extreme point of $B_X$. Then there exists a nonzero $y$ in $X$ such that $x + \eps y \in B_X$ whenever $|\eps|\leq 1$. Take a maximal clique $\tau$ in $V$ containing some vertices which are nonzero coordinates of $y$. Consider the projection $P_\tau$ of $X$ onto the linear span $Y$ of $\{e_j : j\in \tau\}$. Then it follows from assumption that $P_\tau x$ is also not a complex extreme point of $B_Y$. Moreover, we can check that $P_\tau x$ is on the unit sphere of $X$. Indeed, if an extreme point $x^\ast_\tau$ of $B_{X^\ast}$ is defined by $x^\ast_\tau(e_j)= 1$ for $j\in \tau$ and $x^\ast_\tau(e_j)=0$ for $j\notin \tau$, then we have $x^\ast_\tau(y_j)=1$ for all $j=1,2,\ldots,m$ by Theorem \ref{graph}. Consequently,
$$
x^\ast_\tau(P_\tau x) = x^\ast_\tau(x) = \sum^{m}_{j=1}\lambda_j x^\ast_\tau(y_j) = \sum^{m}_{j=1}\lambda_j = 1
.$$
Now, from the fact that $Y = \textrm{span} \{e_j : j\in\tau\}$ is isometrically isomorphic to $\ell_1^{|\tau|}$ since $\tau$ is a clique, we get that every element of norm one in $Y$ is a complex extreme point of $B_Y$ \cite{Glo75}. This is a contradiction.
\end{proof}

The above characterization of complex extreme points have some application in the theory about the numerical index of Banach spaces. Specifically we want to apply to the \emph{analytic numerical index} of $X$ defined by
$$ n_a(X) = \inf\{v(P) : \|P\|=1, P\in \mathcal{P}(X:X)\} $$
It is easily observed that $0 \leq n_a(X) \leq n^{(k)}(X) \leq n(X) \leq 1$ for every $k\geq 2$. Note that a necessary and sufficient condition for a finite dimensional complex Banach space to have the analytic numerical index 1 can be stated using complex extreme points as follows (see \cite{Lee07b}.): $\quad n_a(X)=1$ if and only if
\begin{equation}\label{lee_holoindex}
|x^{\ast}(x)|=1 \textrm{ for all }x \in \ext_{\C} B_X \textrm{ and } x^\ast \in \ext B_{X^\ast}.
\end{equation}

After applying the characterization of complex extreme points to the above theorem, we have the following corollary by the same argument as the proof of Theorem~\ref{polyindex}.

\begin{cor}\label{holoindex}
Let $X$ be an n-dimensional complex Banach space with an absolute norm. Then $n_a(X)=1$ if and only if $X$ is isometric to $\ell_\infty^n$.
\end{cor}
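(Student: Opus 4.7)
The plan is to mirror the proof of Theorem~\ref{polyindex} almost verbatim, with two substitutions: replace the criterion \eqref{lee_polyindex} by the analytic criterion \eqref{lee_holoindex}, and replace the role of ``$\tfrac12(x+y)\in\tilde{\rho}\mathcal{P}({}^2 X)$'' (supplied by Theorem~\ref{poly_peak_pt}) by ``$\tfrac12(x+y)\in\ext_\C B_X$'' (supplied by Theorem~\ref{complex_ext}). Concretely, for the forward direction I would first note that $n_a(X)=1$ forces $n(X)=1$, so $X$ is a CL-space and Reisner's Theorem~\ref{graph} applies to $G=G(X)$. The main intermediate goal is to show that $B_X$ admits a unique extreme point whose coordinates are all nonnegative real numbers.

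To establish that uniqueness, suppose for contradiction that $x\neq y$ are two such extreme points, and set $z=\tfrac12(x+y)$. By Theorem~\ref{complex_ext}, $z\in\co(\R_+^n\cap\ext B_X)=\R_+^n\cap\ext_\C B_X$, so $z$ is a complex extreme point of $B_X$. The supports of $x$ and $y$ are distinct maximal stable sets of $G$, so (after possibly swapping) I may pick $v\in\supp x\setminus\supp y$ and extend $\{v\}$ to a maximal clique $\tau$. By the uniqueness clause of Theorem~\ref{graph}, $v$ is the unique common element of $\tau$ with $\supp x$, while the common element of $\tau$ with $\supp y$ is some $j\neq v$. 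Defining $x^*_\tau\in\ext B_{X^*}$ exactly as in the proof of Theorem~\ref{polyindex} --- namely, value $-1$ at $v$, $+1$ on $\tau\setminus\{v\}$, and $0$ off $\tau$ --- I get $x^*_\tau(x)=-1$ and $x^*_\tau(y)=+1$, hence $|x^*_\tau(z)|=0$, contradicting \eqref{lee_holoindex}.

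Once uniqueness is in hand, $G$ has a unique maximal stable set $S$; since every singleton $\{v\}$ extends to a maximal stable set that must coincide with $S$, one obtains $V=S$, hence $E(G)=\emptyset$. Combined with the absolute-norm bounds $|x_j|\le\|x\|\le\sum_j|x_j|$ and $\sum_j e_j\in\ext B_X$, this identifies $X$ isometrically with $\ell_\infty^n$. For the converse, a direct calculation shows $\ext_\C B_{\ell_\infty^n}=\{(a_1,\ldots,a_n):|a_i|=1\text{ for all }i\}$ and $\ext B_{(\ell_\infty^n)^*}=\{\lambda e_i^*:|\lambda|=1\}$, so $|x^*(x)|=1$ for any pair $(x,x^*)$ from these sets, and \eqref{lee_holoindex} yields $n_a(\ell_\infty^n)=1$. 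The only substantive novelty beyond the proof of Theorem~\ref{polyindex} lies in invoking Theorem~\ref{complex_ext} to locate $\tfrac12(x+y)$ inside $\ext_\C B_X$; once this is done, the graph-theoretic contradiction runs unchanged, so I do not foresee any serious obstacle.
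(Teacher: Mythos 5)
Your proposal is correct and follows essentially the same route as the paper, which proves this corollary by repeating the argument of Theorem~\ref{polyindex} with the criterion~\eqref{lee_holoindex} in place of~\eqref{lee_polyindex} and with Theorem~\ref{complex_ext} supplying $\tfrac12(x+y)\in\ext_\C B_X$ in place of Theorem~\ref{poly_peak_pt}. Your extra care in choosing the maximal clique $\tau$ through a vertex of $\supp x\setminus\supp y$ (so that the common elements with $\supp x$ and $\supp y$ genuinely differ) is a small but welcome tightening of a point the paper leaves implicit.
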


As an immediate consequence of the above theorem, we get a corollary. For further details, we need some definitions about the Daugavet property. A function $\Phi \in \ell_\infty(B_X,X)$ is said to satisfy the \emph{(resp. alternative) Daugavet equation} if the norm equality
$$\|Id + \Phi\|= 1 + \|\Phi\|$$
$$ \left(\text{ resp. } \sup_{\omega \in S_\C} \|Id + \omega \Phi\|= 1 + \|\Phi\|\ \right)$$

holds. If this happens for all weakly compact polynomials in $\mathcal{P}(X:X)$, we say that $X$ has the \emph{(resp. alternative) p-Daugavet property}. Similarly, $X$ is said to have the \emph{$k$-order Daugavet property} if the Daugavet equation is satisfied for all rank-one $k$-homogeneous polynomials in $\mathcal{P}(^kX:X)$.

\begin{cor}\label{dp_index}
Let $X$ be a finite dimensional complex Banach space with an absolute norm. Then the followings are equivalent.
\begin{enumerate}
\item[$(a\,)$] $X$ has the alternative p-Daugavet property.
\item[$(b\,)$] $X$ has the $k$-order Daugavet property for some $k\geq 2$.
\item[$(b')$] $X$ has the $k$-order Daugavet property for every $k\geq 2$.
\item[$(c\,)$] $n^{(k)}(X)=1$ for some $k\geq 2$.
\item[$(c')$] $n^{(k)}(X)=1$ for every $k\geq 2$.
\item[$(d\,)$] $n_a(X)=1$
\item[$(e\,)$] $X = \ell^n_\infty$
\end{enumerate}
\end{cor}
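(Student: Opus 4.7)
The plan is to route every condition through $(e)$, using Theorem \ref{polyindex} and Corollary \ref{holoindex} as the substantive inputs. Concretely, I will establish the two chains
$$ (e) \Rightarrow (b') \Rightarrow (b) \Rightarrow (c) \Rightarrow (e), \qquad (e) \Rightarrow (a) \Rightarrow (c') \Rightarrow (c), $$
together with $(d) \Leftrightarrow (e)$, which is already Corollary \ref{holoindex}, and the trivial $(c') \Rightarrow (c)$ and $(b') \Rightarrow (b)$.

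For the forward implications from $(e)$: on $\ell_\infty^n$ every rank-one $k$-homogeneous polynomial $P(x) = \phi(x) y$ satisfies the true Daugavet equation, since an extreme point $x_0$ of $B_{\ell_\infty^n}$ realizing $|\phi(x_0)| = \|\phi\|$ can be chosen to have sign pattern matching that of $y$ (possible because $\ell_\infty^n$ has the absolute norm and coordinate signs can be freely adjusted without changing $|\phi|$), yielding $\|x_0 + P(x_0)\| = 1 + \|P\|$ and hence $(b')$. The identities $n^{(k)}(\ell_\infty^n) = 1$ and $n_a(\ell_\infty^n) = 1$ come from the reverse directions of Theorem \ref{polyindex} and Corollary \ref{holoindex}, giving $(c')$ and $(d)$; condition $(a)$ then follows from the standard complex-space equivalence $v(P) = \|P\| \Leftrightarrow \max_{|\omega|=1}\|Id + \omega P\| = 1 + \|P\|$ applied degree by degree to the homogeneous components of an arbitrary polynomial.

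For the backward direction: $(c) \Rightarrow (e)$ is exactly Theorem \ref{polyindex}, while $(b) \Rightarrow (c)$ uses that the true Daugavet equation implies the alternative one, which in the complex setting is equivalent to $v(P) = \|P\|$; requiring this on every rank-one $k$-homogeneous polynomial forces $n^{(k)}(X) = 1$ via the polynomial-numerical-index characterization of \cite{Lee07b}, specifically (\ref{lee_polyindex}). The implication $(a) \Rightarrow (c')$ runs analogously: the alternative p-Daugavet property applied to each homogeneous component of degree $k$ gives $v(P) = \|P\|$ for every $P \in \mathcal{P}({}^k X:X)$ (all polynomials are weakly compact in finite dimensions), hence $n^{(k)}(X) = 1$ for every $k \geq 2$.

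The main obstacle is the bookkeeping between the alternative and the true Daugavet equations and the several polynomial classes involved (rank-one $k$-homogeneous versus arbitrary weakly compact polynomials). The key fact to invoke is the standard identity characterizing the numerical radius $v(P)$ as the optimal constant in the alternative Daugavet equation, which allows each Daugavet-type condition to be translated into a statement about $v(P) = \|P\|$ and hence about the polynomial numerical index. Once this translation is in place, the corollary collapses to Theorem \ref{polyindex} together with Corollary \ref{holoindex}.
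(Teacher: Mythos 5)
Your logical skeleton (routing every condition through $(e)$ via Theorem~\ref{polyindex} and Corollary~\ref{holoindex}, and translating the Daugavet-type conditions into statements about $v(P)=\|P\|$) is essentially the paper's, but two of the steps you prove by hand have real problems. In $(e)\Rightarrow(b')$ your justification fails: for a holomorphic $k$-homogeneous $\phi$ on complex $\ell_\infty^n$, coordinate phases of a maximizer cannot be ``freely adjusted without changing $|\phi|$'' --- take $\phi(x)=\tfrac14(x_1+x_2)^2$ on $\ell_\infty^2$, whose only norm-attaining points are $(\lambda,\lambda)$ with $|\lambda|=1$, so no maximizer has sign pattern $(1,-1)$; and ``sign pattern matching that of $y$'' is not the relevant condition anyway. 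The correct elementary argument uses homogeneity: choose a maximizer $x_0$ on the torus and a coordinate $j$ with $|y_j|=\|y\|_\infty$, then replace $x_0$ by $\lambda x_0$ and pick $\arg\lambda$ so that $(k-1)\arg\lambda$ aligns the phase of the $j$-th coordinate with that of $\phi(\lambda x_0)y_j$; this is solvable exactly because $k\ge 2$. (Equivalently, for complex $k$-homogeneous polynomials with $k\ge2$ the substitution $x\mapsto\lambda x$ shows $\|Id+\omega P\|$ is independent of $\omega$, so the alternative and genuine Daugavet equations coincide; this is why the paper gets $(a)\Rightarrow(b')$ as an ``easy'' arrow and only needs $(e)\Rightarrow(a)$, which it quotes from \cite[Corollary~2.10]{CGMM07}.)

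The more serious gap is $(b)\Rightarrow(c)$. From the $k$-order Daugavet property you obtain $v(P)=\|P\|$ only for \emph{rank-one} $k$-homogeneous polynomials, and the passage from this to $n^{(k)}(X)=1$ is not formal: for $k=1$ the analogous rank-one statement (the alternative Daugavet property) does not imply numerical index one. Citing (\ref{lee_polyindex}) does not close the gap by itself, because to use it you must first prove $|x^*(x_0)|=1$ for every $x_0\in\tilde{\rho}\mathcal{P}({}^k X)$ and every $x^*\in\ext B_{X^*}$; deriving that from the rank-one hypothesis requires an actual argument (for instance, apply $v(\phi\otimes y)=\|y\|$ for a polynomial $\phi$ strongly attaining at $x_0$ and arbitrary $y$, pass to limits in $\Pi(X)$ using the strong attainment and finite dimensionality, and invoke Milman's theorem), or else one simply cites \cite[Proposition~1.3]{CGMM07}, which is exactly what the paper does for $(b)\Rightarrow(c)$ and $(b')\Rightarrow(c')$. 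Finally, a smaller slip in your $(e)\Rightarrow(a)$: applying the equivalence between the alternative Daugavet equation and $v(P)=\|P\|$ ``degree by degree'' proves nothing for a non-homogeneous polynomial, since the alternative Daugavet equation for the homogeneous components does not pass to their sum; what does work is that $n_a(\ell_\infty^n)=1$ (your $(d)$) already gives $v(P)=\|P\|$ for the whole polynomial, after which the equivalence is applied to $P$ itself.
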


\begin{proof}
Theorem~\ref{polyindex} and Theorem~\ref{holoindex} imply $(c)\Leftrightarrow (e)$ and $(d)\Leftrightarrow (e)$ respectively. $(b) \Rightarrow (c)$ or $(b') \Rightarrow (c')$ is induced by \cite[Proposition~1.3]{CGMM07}.
It is also easy to check that
$$
\begin{array}{ccccccc}
& & (b) & \Rightarrow & (c) & \Leftrightarrow & (e) \\
& & \Uparrow & & \Uparrow & & \Updownarrow \\
(a) & \Rightarrow & (b') & \Rightarrow & (c') & \Leftarrow & (d) \\
\end{array}
$$
Corollary 2.10 in \cite{CGMM07} shows $(e)\Rightarrow (a)$. The proof is done.
\end{proof}

\begin{rem}
Let $X$ be a finite dimensional (real or complex) Banach space with an absolute norm. Then it is impossible that $X$ has the p-Daugavet property. Indeed, if $X$ is a real (resp. complex) Banach space, then $n^{(k)}(X)=1$ for all $k\ge 2$ and $X=\R$ (resp. $X=\ell_\infty^n$ for some $n$) (see \cite{Lee07b} and Corollary~\ref{polyindex}). Then it is easy to see that both $\R$ and complex $\ell_\infty^n$ do not have $p$-Daugavet property.
\end{rem}

It is worth to note that the complex extreme points play an important role in the study of norming subsets of $\mathcal{A}(B_X)$. In particular, $\rho \mathcal{A}(B_X)=\ext_\C B_X$ when $X$ is a finite dimensional complex Banach space (see \cite{CHL07}). Using the result in \cite{HN05}, we obtain the following corollary. For the further references about complex convexity and monotonicity, see \cite{LeeHJ05, LeeHJ07, Lee07}.

\begin{cor}\label{real_mon}
Let $X = (\R^n,\|\cdot\|)$ be an n-dimensional real CL-space with an absolute norm. Then an element $(|a_1|,|a_2|,\ldots,|a_n|)$ in $S_X$ is a point of upper monotonicity of $X$ if and only if $(|a_1|,|a_2|,\ldots,|a_n|)$ is a convex combination of extreme points of $B_X$ whose coordinates all are positive real numbers.
\end{cor}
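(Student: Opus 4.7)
The plan is to deduce Corollary~\ref{real_mon} from Theorem~\ref{complex_ext} by passing to the complexification, with the bridge provided by the result of Hudzik and Narloch~\cite{HN05} identifying the points of upper monotonicity of a real K\"othe space with the real-coordinate complex extreme points of its complexification.

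First I would set $\tilde X = (\C^n, \|\cdot\|_{\C})$, the complexification of $X$. By Proposition~\ref{propbasic}, $\tilde X$ is a complex CL-space with an absolute norm, and $\ext B_X$ coincides with the set of real-coordinate extreme points of $B_{\tilde X}$; in particular $\R_{+}^n \cap \ext B_X = \R_{+}^n \cap \ext B_{\tilde X}$. (By Theorem~\ref{graph}, every extreme point of $B_X$ already has coordinates in $\{-1,0,1\}\subset\R$, so this matching is automatic.) Next I would invoke the Hudzik-Narloch characterization from~\cite{HN05}: a non-negative element $x$ of $S_X$ is a point of upper monotonicity of the real K\"othe space $X$ if and only if $x$, viewed as an element of $\tilde X$, belongs to $\ext_{\C} B_{\tilde X}$. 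Applied to $x=(|a_1|,\ldots,|a_n|)$, this says that $x$ is a point of upper monotonicity of $X$ precisely when $x\in\R_{+}^n\cap\ext_{\C}B_{\tilde X}$.

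Finally, Theorem~\ref{complex_ext} applied to the complex CL-space $\tilde X$ yields $\R_{+}^n\cap\ext_{\C}B_{\tilde X} = \co(\R_{+}^n\cap\ext B_{\tilde X})$, which by the first step equals $\co(\R_{+}^n\cap\ext B_X)$. Chaining the equivalences gives exactly the stated characterization. The only non-routine ingredient is the correct invocation of the Hudzik-Narloch theorem; once that identification is in hand, Theorem~\ref{complex_ext} and Proposition~\ref{propbasic} close the argument without further computation, so no serious obstacle remains.
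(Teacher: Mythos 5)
Your proposal is correct and follows exactly the route the paper intends: the paper gives no separate proof, only the remark that Corollary~\ref{real_mon} follows from Theorem~\ref{complex_ext} via the Hudzik--Narloch result in \cite{HN05}, which is precisely your bridge through the complexification together with Proposition~\ref{propbasic}. No gaps; this matches the paper's argument.
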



\bibliographystyle{amsalpha}

{\noindent Department of Mathematics\\
POSTECH\\
Republic of Korea}

\end{document}